%
%
%
\documentclass{amsproc}

\newtheorem{thm}{Theorem}[section]
\newtheorem{lem}[thm]{Lemma}
\newtheorem{cor}[thm]{Corollary}
\newtheorem{prop}[thm]{Proposition}

\theoremstyle{definition}
\newtheorem{defn}[thm]{Definition}

\newtheorem{notn}[thm]{Notation}

\theoremstyle{remark}
\newtheorem{rem}[thm]{Remark}

\numberwithin{equation}{section}




\newcommand{\g}{\mathfrak{g}}
\newcommand\C[1]{\mathcal{#1}}
\newcommand{\spec}{{\rm Spec}}
\newcommand{\prim}{{\rm Prim}}
\newcommand{\gc}{ [ \hspace{-0.65mm} [}
\newcommand{\dc}{]  \hspace{-0.65mm} ]}



\usepackage[pdftex]{graphicx}

\usepackage{pgf}


\begin{document}

\title[Enumeration of torus-invariant strata]{Enumeration of torus-invariant strata with respect to dimension in the big cell of the quantum minuscule Grassmannian of type $\mathbf{B_n}$}

\author{Jason Bell}
\address{Department of Mathematics, Simon Fraser University, Burnaby, BC V5A 1S6, Canada}
\email{jpb@math.sfu.ca}
\thanks{The first and second named authors thank NSERC for its generous support.}

\author{Karel Casteels}
\address{Department of Mathematics, University of California, Santa Barbara, CA 93106, USA}
\email{casteels@math.ucsb.edu}

\author{St\'ephane Launois}
\address{School of Mathematics, Statistics and Actuarial Science, University of Kent, Canterbury, Kent CT2 7NF, United Kingdom}
\email{S.Launois@kent.ac.uk}

\subjclass[2000]{16W35; 20G42}\date{December 31, 2010 and, in revised form, XXX.}
\dedicatory{This paper is dedicated to Ken Goodearl for his 65th birthday.}

\keywords{Quantum algebras, primitive ideals, algebraic combinatorics}

\begin{abstract}
The aim of this article is to give explicit formulae for various generating functions, including the generating function of torus-invariant primitive ideals in the big cell of the quantum minuscule grassmannian of type $B_n$.
\end{abstract}

\maketitle



\section{Introduction}

Let $\g$ be a simple Lie algebra of rank $n$ over the field of complex numbers, and let $\pi:=\{\alpha_1,\dots,\alpha_n\}$ be the set of simple roots associated to a triangular decomposition $\g=\mathfrak{n}^- \oplus \mathfrak{h} \oplus \mathfrak{n}^+$. Let $W$ be the Weyl group associated to $\g$. 

The aim of this article is to study the prime spectrum of so-called quantum Schubert cells from the point of view of algebraic combinatorics. Quantum Schubert cells have been introduced by De Concini-Kac-Procesi as quantisations of enveloping algebras of nilpotent Lie algebras $\mathfrak{n}_w:= \mathfrak{n}^+ \cap \mathrm{Ad}_{w}(\mathfrak{n}^-) $, where $\mathrm{Ad}$ stands for the adjoint action and $w \in W$.  These noncommutative algebras are defined thanks to the braid group action of $W$ on the quantised enveloping algebra $U_q(\g)$ induced by Lusztig automorphisms. The resulting (quantum) algebra associated to a chosen $w \in W$ is denoted by $U_q[w]$. Here $q$ denotes a nonzero element of the base field $\mathbb{K}$, and we assume that $q$ is not a root of unity. It was recently shown by Yakimov that these algebras can be seen as the Schubert cells of the quantum flag varieties. Our aim is to study combinatorially the prime spectrum of the algebras $U_q[w]$. In order to explain the main results of this paper, a brief sketch of background is needed.

\subsection{$\C{H}$-Stratification.} In order to investigate the primitive ideals of various quantum algebras, Goodearl and Letzter have developed a strategy based on the rational action of a torus. More precisely, they define a stratification of the prime and primitive spectra of an algebra $A$ supporting a rational action of a torus $H$ \cite{gletDM}. In the context of the quantum Schubert cell $U_q[w]$, there is a natural action of the torus $\C{H}:=(\mathbb{K}^*)^n$, and the associated stratification of the prime spectrum is parametrised by those prime ideals that are invariant under this torus action, the so-called $\C{H}$-primes. Moreover each stratum is homeomorphic to the prime spectrum of a commutative Laurent polynomial ring over $\mathbb{K}$. Torus-invariant prime ideals of $U_q[w]$ have recently been studied by M\'eriaux and Cauchon \cite{mer1} on one hand and by Yakimov \cite{yakimov} on the other hand.  In particular, they proved that $\C{H}$-invariant primes in $U_q[w]$ are in one-to-one correspondence with the initial Bruhat interval $[\mathrm{id},w]$. Hence the stratification of the prime spectrum of $U_q[w]$ can be written:
\begin{eqnarray}
\label{eq:stratification}
\spec(U_q[w]) =\bigsqcup_{v \leq w} \spec_v(U_q[w]),
\end{eqnarray}
where the stratum $ \spec_v(U_q[w])$ associated to $v \leq w$  is homeomorphic to the prime spectrum $$\spec \left( \mathbb{K}[z_{v,1}^{\pm 1}, \dots , z_{v,d(v)}^{\pm 1}] \right)$$ of a commutative Laurent polynomial ring over the base field. The (Krull) dimension of these commutative Laurent polynomial rings has recently been computed in \cite{BCLuqw}, where we prove that 
\begin{eqnarray}
\label{eqdimstratum}
\dim  \spec_v(U_q[w]) =d(v)= \dim(\ker(v+w)),
\end{eqnarray}
$v$ and $w$ acting on the dual $\mathfrak{h}^*$ of the Cartan subalgebra $\mathfrak{h}$. See also \cite{Yak3} where a similar formula is established but with stronger hypothesis on $\mathbb{K}$ and $q$.

\subsection{Big cells of quantum Grassmannians.} Let $j$ be an element of the set $[n]:=\{1, \dots , n\}$ and  $J:=\{1, \dots, n\} \setminus \{j\}$. We denote by $W_J$ the associated parabolic subgroup. Recall that $W_J$ is the subgroup of $W$ generated by $s_i$ with $i \in J$. We denote by $W^J$ the set of minimal length coset representatives of $W/W_J$. It has a unique maximal element (for the induced Bruhat order) that we denote by $w^J_{max}$. 

As explained above, Yakimov \cite{yakimovflag} recently proved that the algebra $U_q[w^J_{max}]$ can actually be seen as the big cell of the quantum Grassmannian associated to the fundamental weight $\varpi_j$. More precisely, there exists an isomorphism between a localisation of the quantum Grassmannian associated to the  the fundamental weight $\varpi_j$ and a skew-Laurent extension of the algebra $U_q[w^J_{max}]$. This isomorphism allows one to transfer information between these two algebras, and shows that to understand the prime spectrum of the quantum Grassmannian associated to the fundamental weight $\varpi_j$ a first step is to classify the prime spectrum of the algebra $U_q[w^J_{max}]$.

For example, the well-known algebra of $m\times n$ quantum matrices appears in this context as it was proved by M\'eriaux and Cauchon that this algebra is isomorphic to 
$U_q[w^{\{1,\dots,m+n-1\}\setminus\{m\}}_{max}]$ when $\g$ is of type $A_{m+n-1}$. In \cite{BCLqm}, we used our formula (\ref{eqdimstratum}) for the dimension of a stratum in order to derive various enumeration results. In particular, we give a closed formula for the trivariate generating function that counts the $d$-dimensional $\C{H}$-strata in $m\times n$ quantum matrices.

\subsection{Main results.} In this paper, we consider the case where the Lie algebra $\g$ is of type $B_n$ and $\varpi_j$ is the unique minuscule weight. That is, we let $\g$ be the simple Lie algebra $\mathfrak{so}_{2n+1}$ over the field of complex numbers, and let $\pi:=\{\alpha_1,\dots,\alpha_n\}$ be the set of simple roots, where $\alpha_n$ is the unique short simple root. 
Moreover, we set $J:=\{1, \dots, n-1\}$, and $W_J$ denotes the associated parabolic subgroup. The unique maximal element of $W/W^J$ is  denoted by $w^J_{max}$. The main result of this article gives an explicit formula for the two term generating function $H(x,t)$ whose coefficient of $\frac{x^n}{n!}t^d$ is the number of $d$-dimensional $\C{H}$-strata in $U_q[w^J_{max}]$. More precisely, we prove the following result.
\begin{thm}
\label{thmintro}
Let $\g$ be a simple Lie algebra of type $B_n$, and let $w^J_{max}$ be the unique minimal length coset representative of $W/W_{\{1,\dots ,n-1\}}$. 
Denote by  $H(x,t)$ the two term generating function whose coefficient of $\frac{x^n}{n!}t^d$ is the number of $d$-dimensional $\C{H}$-strata in $U_q[w^J_{max}]$.
Then 
$$H(x,t)=\left(\frac{e^x}{2-e^x}\right)^{\frac{t+1}{2}}.$$
\end{thm}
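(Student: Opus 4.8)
The plan is to convert the statement into a purely combinatorial enumeration and then close it off with a generating-function computation. By the stratification \eqref{eq:stratification} and the dimension formula \eqref{eqdimstratum}, the quantity we must control is, for each $d$, the number of $v$ in the Bruhat interval $[\mathrm{id},w^J_{max}]$ with $\dim\ker(v+w^J_{max})=d$. Writing
$$P_n(t):=\sum_{v\le w^J_{max}} t^{\dim\ker(v+w^J_{max})}$$
for the type $B_n$ situation, the theorem is exactly the assertion that $H(x,t)=\sum_{n\ge 0}P_n(t)\tfrac{x^n}{n!}$ equals $\bigl(e^x/(2-e^x)\bigr)^{(t+1)/2}$, so everything reduces to understanding the polynomials $P_n(t)$.

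Next I would reinterpret the dimension statistic. In the signed-permutation realisation of $W=B_n$ acting on $\mathfrak{h}^*\cong\comp^n$, the maximal coset representative is $w^J_{max}=w_0w_{0,J}=[-n,\dots,-1]$, that is, $w^J_{max}=-A$ where $A$ is the order-reversing involution $e_i\mapsto e_{n+1-i}$. Since $A^2=\mathrm{id}$, one has $\ker(v+w^J_{max})=\ker(v-A)=\mathrm{Fix}(Av)$, and hence
$$\dim\ker(v+w^J_{max})=\dim\mathrm{Fix}(Av)=\#\{\text{cycles of }Av\text{ whose product of signs is }+1\}.$$
The real work begins here: one must describe explicitly the set $\{Av: v\le w^J_{max}\}$ of signed permutations, using that $w^J_{max}$ is the maximal minuscule representative of length $\binom{n+1}{2}$, and record for each element the number of such positive cycles.

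With the statistic in hand I would assemble the bivariate exponential generating function. The cleanest target is the first-order linear relation
$$(2-e^x)\,\frac{\partial H}{\partial x}=(t+1)\,H,\qquad H(0,t)=1,$$
which one verifies is equivalent to the claimed closed form by integrating $\partial_x\log H=(t+1)/(2-e^x)$ together with $\int dx/(2-e^x)=\tfrac12\log\bigl(e^x/(2-e^x)\bigr)$. Extracting coefficients, this relation is precisely the recurrence
$$P_{n+1}(t)=(t+1)\,P_n(t)+\sum_{k=1}^{n}\binom{n}{k}\,P_{n-k+1}(t),\qquad P_0=1,$$
so it suffices to prove this recurrence combinatorially: the term $(t+1)P_n$ should record the two ways the largest index is adjoined to a configuration on the remaining indices (one of which raises the kernel dimension, contributing the $t$), while the sum should record configurations in which the largest index lies in a nontrivial cycle built on a chosen $k$-subset. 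Equivalently one may recognise $\log H=(t+1)\,\mathrm{artanh}(e^x-1)$ and argue via the exponential formula that each ``connected'' piece contributes a factor $(t+1)$.

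The genuine obstacle is the middle step. A Bruhat interval is cut out by a \emph{global} order condition, whereas the recurrence (or the exponential-formula factorisation) demands a \emph{local}, per-cycle description of which $Av$ occur and of how the positive-cycle count behaves when the largest index is removed; in particular one must show that splitting off a cycle on a $k$-subset leaves an admissible configuration indexed by $P_{n-k+1}$ rather than $P_{n-k}$. The half-integer exponent $(t+1)/2$ is then a harmless consequence of the factor $2$ in $\log\bigl(e^x/(2-e^x)\bigr)=2\,\mathrm{artanh}(e^x-1)$. As sanity checks I have verified $P_1=t+1$, $P_2=(t+1)(t+2)$ and $P_3=(t+1)(t^2+5t+7)$ against the closed form, and that $P_n(1)$ counts the full interval $[\mathrm{id},w^J_{max}]$; these make me confident the recurrence above is the correct engine, after which solving the differential relation is routine.
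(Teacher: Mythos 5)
Your reduction is sound as far as it goes: identifying $w^J_{max}$ with $-A$ ($A$ the order-reversing involution), rewriting $\dim\ker(v+w^J_{max})$ as the number of cycles of the signed permutation $Av$ with positive sign product, and checking that the claimed closed form is equivalent to the relation $(2-e^x)\partial_x H=(t+1)H$, hence to the recurrence $P_{n+1}=(t+1)P_n+\sum_{k=1}^n\binom{n}{k}P_{n-k+1}$, are all correct (and your reformulation of the dimension statistic is equivalent to the one the paper obtains in its Theorem 2.9). But the proposal stops exactly where the proof has to start. You write that ``the real work begins here'' and that ``the genuine obstacle is the middle step,'' and you never carry out that step: you give no description of which signed permutations $Av$ actually arise as $v$ ranges over the Bruhat interval $[\mathrm{id},w^J_{max}]$, and no argument that the positive-cycle statistic on this set satisfies either your recurrence or an exponential-formula factorisation. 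Without that, nothing forces the generating function to satisfy the differential relation; the sanity checks for $n\le 3$ do not substitute for it. This missing piece is the entire content of the paper's argument: it replaces the global Bruhat condition by a local one via Cauchon diagrams (equivalently, the Lam--Williams description of \reflectbox{L}-diagrams of type $(B_n,n)$, Theorem 2.5 and its Corollary), converts a diagram into a signed permutation by pipe dreams, classifies the resulting cycles into types (a), (b), (c) and proves that the stratum dimension is the number of even type (b) plus odd type (c) cycles, and only then invokes the exponential formula, computing the component generating function $D(x,t)$ explicitly with Stirling numbers of the second kind.

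A further caution: your parenthetical claim that ``each connected piece contributes a factor $(t+1)$'' is true but is an \emph{output} of the computation, not an a priori fact. In the paper's notation the weight of a size-$n$ component is $a_nt+b_n$, where $a_n$ and $b_n$ count single-cycle configurations of the two relevant types; that $a_n=b_n$ for every $n$ (which is what produces the clean exponent $\tfrac{t+1}{2}$) only falls out after the Stirling-number identity $\sum_{j\ge 1}(j-1)!\,(e^x-1)^j/j!=-\log(2-e^x)$ is applied to both parity classes. Likewise, the index shift $P_{n-k+1}$ in your recurrence is a genuine obstruction to the direct ``remove the largest index'' bijection you sketch, which is presumably why the paper argues through $H=\exp(D)$ rather than through a recurrence. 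So the skeleton is right and the endgame is routine, but the theorem is not yet proved: you still owe the explicit combinatorial model of the interval and the per-cycle enumeration.
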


Specialising at $t=1$, we obtain the exponential generating function for the number of $\C{H}$-strata in $U_q[w^J_{max}]$. On the other hand, specialising at $t=0$, we obtain the exponential generating function for the number of $\C{H}$-strata of dimension 0 in $U_q[w^J_{max}]$. By the Stratification Theorem of Goodearl and Letzter (see for instance \cite{bg}), a stratum is 0-dimensional exactly when the associated $\C{H}$-prime is primitive. So we deduce from the previous theorem that the exponential generating function for the primitive $\C{H}$-primes in $U_q[w^J_{max}]$ is 
$$\left(\frac{e^x}{2-e^x}\right)^{\frac{1}{2}}.$$

Finally, we use Theorem \ref{thmintro} in order to prove that the proportion of primitive $\C{H}$-primes tends to $0$ as $n\to \infty$.

The methods developed in order to prove the above theorem are somehow similar to those developed in \cite{BCLqm} to attack the quantum matrix case. 
In particular, we will use the notion of pipe dreams in the context of ``signed permutations'', and develop the notion of symmetric Cauchon diagrams which can be seen as a type $B_n$ analogue of the well-known Cauchon diagrams for quantum matrices. Symmetric Cauchon diagrams have roughly speaking already appeared under the name \reflectbox{L}-diagrams of type ($B_n$,$n$) in \cite{LW}. \\

Throughout this paper, we use the following conventions. 
\begin{enumerate}
\item[$(i)$]
 If $R$ is a finite set or sequence, $|R|$ denotes its cardinality.  
\item[$(ii)$]  For any natural number $t$, we set $[t]:=\{1, \dots , t\}=\gc 1,t \dc$. 
\item[$(iii)$] $\mathbb{K}$ denotes an infinite field and we set
$\mathbb{K}^*:=\mathbb{K}\setminus \{0\}$. 
\item[$(iv)$] $q \in \mathbb{K}^*$ is not a root of unity. 
\item[$(v)$] If $A$ is a $\mathbb{K}$-algebra, then $\spec(A)$ and $\prim(A)$ denote respectively its prime and primitive spectra.\\
\end{enumerate}

\noindent {\bf Acknowledgments:} We thank the anonymous referee for comments that have greatly improved this text.

\section{Cauchon diagrams and Permutations in type $\mathbf{B_n}$}

\subsection{Cauchon diagrams}

Consider any $w\in W$, and set $t := l(w)$. Let $w= s_{i_1} \circ \cdots \circ s_{i_t}$ $(i_j \in \{1, \dots , n\})$ be a reduced decomposition of $w$. It is well known that
$\beta_1 = \alpha_{i_1}$, $\beta_2 = s_{i_1}(\alpha_{i_2})$, $\ldots$, $\beta_t = s_{i_1} \circ \cdots \circ s_{i_{t-1}}(\alpha_{i_t})$ 
are distinct positive roots and that the set $\{\beta_1, ..., \beta_t\}$ does not depend on the chosen reduced expression of $w$. 

A subset $\Delta$ of $[t]$ is called a {\it diagram} of $w$. By abuse of notation, we say that $\beta_i$ belongs to $\Delta$ if $i \in \Delta$. 

\begin{notn} {\rm 
Fix a reduced decomposition $w= s_{i_1} \circ \cdots \circ s_{i_t}$ $(i_j \in \{1, \dots , n\})$ of $w$ and a diagram $\Delta\subseteq\{1,2,\ldots,t\}$. 
\begin{enumerate}
\item For all $k\in [t]$, we set \[s_{i_k}^{\Delta}:=\left\{ \begin{array}{ll} s_{i_k} & {\rm if~}k\in \Delta, \\{\rm id} & {\rm otherwise.}\end{array}\right. \]
\item We also set  $$w^\Delta:=s_{i_1}^\Delta\cdots s_{i_t}^\Delta\in W,$$
$$v^\Delta:=s_{i_t}^\Delta\cdots s_{i_1}^\Delta\in W$$
and 
$$v_{\ell}^\Delta:=s_{i_t}^\Delta\cdots s_{i_{t-\ell+1}}^\Delta\in W$$
for all $\ell \in \{0, \dots , t\}$.
\end{enumerate}}
\end{notn}

\begin{defn} \label{cauchondiagramdef}
{\rm A diagram $\Delta$ is {\it Cauchon (with respect to the fixed reduced decomposition of $w$)} if 
$$v_{\ell-1}^{\Delta} < v_{\ell-1}^{\Delta} s_{i_{t-\ell+1}} $$
for all  $\ell \in \{1, \dots , t\}$.}
\end{defn}

In \cite{mer1}, Cauchon diagrams are called \emph{admissible diagrams.} It was proved by M\'eriaux and Cauchon \cite{mer1} that they coincide 
with positive distinguished subexpressions of the reduced decomposition of $w$ in the sense of Marsh and Rietsch \cite{MR}.

The relevance of Cauchon diagrams for this work is summarized in the following theorem. (The reader is referred to \cite{BCLuqw} for details about the algebra $U_q[w]$.)\\

\begin{thm}[\cite{mer1,BCLuqw}]~ \label{thm: bcl}
\begin{enumerate}
\item The map $\Delta \mapsto w^{\Delta}$ is a bijection from the set of Cauchon diagrams onto the Bruhat interval $[\mathrm{id},w]$. 
\item The deleting-derivation algorithm provides a bijection between the set of torus invariant prime ideals in $U_q[w]$ and the set of Cauchon diagrams.
\item Let $J$ be the unique torus-invariant prime ideal  of $U_q[w]$ associated to the Cauchon diagram $\Delta$. Then the dimension of the associated $\mathcal{H}$-stratum is equal to $\dim(\ker(w^\Delta+w))$.
\end{enumerate}
\end{thm}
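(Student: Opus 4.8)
The plan is to establish the three assertions in turn: (1) by Coxeter-theoretic means, (2) through Cauchon's deleting-derivation machinery, and (3) as a formal consequence of (1), (2) and the dimension formula (\ref{eqdimstratum}). For (1), I would first observe, directly from the definitions, that a diagram $\Delta$ is Cauchon exactly when the subexpression it selects in the reversed reduced word $s_{i_t}\cdots s_{i_1}$ --- which is itself a reduced decomposition of $w^{-1}$ --- is a positive distinguished subexpression in the sense of Marsh--Rietsch. Indeed the relevant partial products of that reversed word are precisely the elements $v_{\ell}^{\Delta}$, and the Cauchon inequality $v_{\ell-1}^{\Delta} < v_{\ell-1}^{\Delta}s_{i_{t-\ell+1}}$ is exactly the positive-distinguished condition. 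The Marsh--Rietsch theorem \cite{MR} then yields that $\Delta \mapsto v^{\Delta}$ is a bijection from the set of Cauchon diagrams onto $[\mathrm{id},w^{-1}]$. Since $v^{\Delta}=(w^{\Delta})^{-1}$ and inversion is an automorphism of the Bruhat order carrying $[\mathrm{id},w^{-1}]$ onto $[\mathrm{id},w]$, post-composing with inversion shows that $\Delta \mapsto w^{\Delta}$ is a bijection onto $[\mathrm{id},w]$.

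For (2), the starting point is that $U_q[w]$ is an iterated Ore extension with PBW generators $X_1,\dots,X_t$ indexed by $\beta_1,\dots,\beta_t$, supporting a rational action of $\C{H}$ that satisfies the hypotheses required for Cauchon's algorithm. The deleting-derivation algorithm then produces a chain of intermediate algebras connecting $U_q[w]$ to a quantum affine space $\overline{A}$, whose variables are again indexed by $[t]$, together with a canonical injection from the set of $\C{H}$-primes of $U_q[w]$ into the set of $\C{H}$-primes of $\overline{A}$. Because the $\C{H}$-primes of a quantum affine space are generated by subsets of the variables, they are parametrised by subsets of $[t]$; the image of the canonical injection consists exactly of the \emph{admissible} subsets, namely those that survive every reverse deleting-derivation step without forcing the inversion of a variable lying in the prime. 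The heart of the matter is to identify this ring-theoretic admissibility with Definition \ref{cauchondiagramdef}: following the $\ell$-th elementary deleting derivation, one checks that it is governed by whether $v_{\ell-1}^{\Delta}s_{i_{t-\ell+1}}$ is longer than $v_{\ell-1}^{\Delta}$, so that admissibility reduces precisely to the inequalities defining a Cauchon diagram. This matching is carried out in \cite{mer1}.

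Assertion (3) is then immediate. By (1) and (2), the $\C{H}$-prime attached to $\Delta$ lies in the stratum indexed by $v=w^{\Delta}\in[\mathrm{id},w]$ in the stratification (\ref{eq:stratification}), and the dimension formula (\ref{eqdimstratum}) gives $\dim\spec_v(U_q[w])=\dim(\ker(v+w))=\dim(\ker(w^{\Delta}+w))$. I expect the principal obstacle to lie in the combinatorial identification inside (2): reconciling the purely ring-theoretic notion of admissibility --- that the iterated deleting-derivation procedure never inverts a variable that has been sent to zero --- with the Bruhat-length condition of Definition \ref{cauchondiagramdef} demands a careful bookkeeping of how each elementary deleting derivation interacts with the reflections of the reduced word, and this is the technical core of \cite{mer1}. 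The uniqueness half of the Marsh--Rietsch bijection used in (1) is the other substantial ingredient.
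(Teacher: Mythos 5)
Your proposal is correct and takes essentially the same route as the paper, which gives no independent proof of Theorem~\ref{thm: bcl} but quotes it from \cite{mer1} and \cite{BCLuqw}: your treatment of (1) via Marsh--Rietsch positive subexpressions read off the reversed reduced word (noting $v^{\Delta}=(w^{\Delta})^{-1}$) and of (2) via Cauchon's deleting-derivation algorithm with the identification of admissibility and the Bruhat-length condition is precisely the content of \cite{mer1}, while (3) is, as you say, the dimension formula (\ref{eqdimstratum}) of \cite{BCLuqw} applied to the stratum indexed by $v=w^{\Delta}$. No gaps beyond what is legitimately delegated to those references.
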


In practice, $\dim(\ker(w^\Delta+w))$ is easy to compute using combinatorial tools. For example, in the quantum matrices case, the reader is referred to~\cite{BCLqm} for results that use the above formula for the dimension of an $\mathcal{H}$-stratum in a crucial way. The present work will also make use of this formula.

\subsection{The big cell of the minuscule quantum Grassmannian of type $\mathbf{B_n}$}

From now on, $\g$ is the Lie algebra $\mathfrak{so}_{2n+1}(\mathbb{C})$. Its simple roots $\alpha_1, \dots , \alpha_n$ are labelled so that $\alpha_n$ is the unique short root. 

Moreover, from now on, we set: 
$$w=w_{max}^{\{1,\dots,n-1\}}=s_n (s_{n-1} s_n) \cdots (s_1s_2\cdots s_{n-1} s_n).$$
Set $J=\{1, \dots ,n-1\}$ and let $W_J$ denote the subgroup of $W$ generated by the $s_i$ with $i \neq n$. Then 
$w$ is the unique minimal length representative of the coset $w_0+W_J$, where $w_0$ is the longest element in $W$.  
In other words, $w$ is the element of maximal length in $W^J$.

The chosen reduced decomposition of $w$ can be represented by the following Young tableau:
 \[\begin{array}{c|c|c|c|c|c|}
\cline{2-2}
&s_n& \multicolumn{4}{c}{} \\
\cline{2-3}
 &s_{n-1}& s_{n}& \multicolumn{3}{c}{}  \\
 \cline{2-4}
 & & \cdots & &\multicolumn{2}{c}{}   \\
 \cline{2-5}
 & s_{1} & \dots & \dots & s_{n} & \multicolumn{1}{c}{}  \\
\cline{2-5}
\end{array} \]

The positive roots $\beta_1$, ..., $\beta_{\frac{n(n+1)}{2}}$ can be placed in a Young tableau shape as follows:
 
 \[\begin{array}{c|c|c|c|c|c|}
\cline{2-2}
&\beta_1& \multicolumn{4}{c}{} \\
 & &  \multicolumn{4}{c}{} \\
\cline{2-3}
 &\beta_2& \beta_3& \multicolumn{3}{c}{}  \\
  & & &  \multicolumn{3}{c}{} \\
 \cline{2-4}
 & & \cdots & &\multicolumn{2}{c}{}   \\
  & & & &  \multicolumn{2}{c}{} \\
 \cline{2-5}
 & \beta_{\frac{n^2-n+2}{2}} & \dots & \dots & \beta_{\frac{n(n+1)}{2}} & \multicolumn{1}{c}{}  \\
 & & & & & \multicolumn{1}{c}{} \\
\cline{2-5}
\end{array} \]

In this case, we can represent a Cauchon diagram $\Delta$  by a certain filling of the Young tableau above 
with black and white, the black boxes indicating the positive roots that belong to $\Delta$. (The box labelled $\beta_i$ is black if and only if $i \in \Delta$.)

A filling of the Young tableau above with black and white is called a {\it diagram of type $B_n$}.

\subsection{Symmetric Cauchon Diagrams}

There is a useful visualization of a Cauchon diagram as a certain coloured grid which we now describe. 

First recall that Cauchon diagrams coincide with positive distinguished subexpressions of $w$ by the work of M\'eriaux-Cauchon \cite{mer1}. 
Hence we deduce from \cite{LW} that Cauchon diagrams coincide with \reflectbox{L}-diagrams in the sense of Lam and Williams. Note however that our convention here are slightly different from those of \cite{LW}. Because of this, a reflection along the $x$-axis must be applied when using the results of Lam and Williams. Moreover Lam and Williams use labellings of Young tableaux by zeroes and crosses, whereas here we colour our Young tableaux with black and white. Our black boxes correspond to their zeroes and our white boxes correspond to their crosses.

In \cite[Theorem 6.1]{LW}, Lam and Williams have described the \reflectbox{L}-diagrams for the case that interests us. As \reflectbox{L}-diagrams and Cauchon diagrams coincide by the above discussion, \cite[Theorem 6.1]{LW} also describes Cauchon diagrams of $U_q[w_{max}^{\{1,\dots,n-1\}}]$. 

\begin{thm}[Lam-Williams, M\'eriaux-Cauchon]
A diagram of type $B_n$ is a Cauchon diagram if and only if the following conditions are satisfied:
\begin{enumerate}
\item If there is a black box below a white box (and in the same column), then all boxes on the left and in the same row as the black box must be black. 
\item If a diagonal box is black, then all boxes to its left (and in the same row) must be black.
\end{enumerate}
\end{thm}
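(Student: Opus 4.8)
The plan is to argue directly from Definition~\ref{cauchondiagramdef}, using the realisation of the Weyl group of type $B_n$ as the group of signed permutations of $\{1,\dots,n\}$, in which $s_1,\dots,s_{n-1}$ are the adjacent transpositions and $s_n$ is the sign change in the last coordinate. First I would reformulate the Cauchon condition using the standard right-descent criterion: for any $v\in W$ and simple reflection $s_i$ one has $v<vs_i$ if and only if $v(\alpha_i)$ is a positive root. Writing $p=t-\ell+1$, Definition~\ref{cauchondiagramdef} is then equivalent to requiring $v_{\ell-1}^{\Delta}(\alpha_{i_{p}})>0$ for every $p$. Concretely this says: sweep the boxes of the staircase in the reverse of the reading order---from the bottom row to the top row, and within each row from right to left---maintaining a signed permutation $v$ (initially the identity); at each box $p$ the diagram passes the test if $v(\alpha_{i_p})>0$, and if the box is black one updates $v\mapsto vs_{i_p}$. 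The diagram $\Delta$ is Cauchon precisely when the test is passed at every box.

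To run this I would record the combinatorial data attached to each box. A short computation (as in the $n=2,3$ cases) shows that the box in row $k$, column $c$ carries the simple reflection $s_{n-k+c}$ and the positive root $\epsilon_{n+1-k}$ if $c=k$ (a diagonal box) and $\epsilon_{n+1-k}+\epsilon_{n+1-c}$ if $c<k$. In particular every diagonal box carries the sign change $s_n$, while the off-diagonal reflections are the transpositions $s_1,\dots,s_{n-1}$. I would then analyse the sweep one row at a time. Within a fixed row the transpositions act by reordering coordinates, and the resulting length-decrease is the familiar one: the test first fails at a black box having a white box above it in its column together with a white box to its left in its row, which is exactly the obstruction ruled out by condition~(1). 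The sign changes coming from the diagonal boxes are responsible for the genuinely type-$B_n$ phenomenon: a black diagonal box applies $s_n$, and the test at that box, or at the boxes swept immediately afterwards in the same row, forces all boxes to its left in the row to be black, which is condition~(2). Establishing both implications then amounts to showing that if (1) and (2) hold no descent ever occurs, and conversely that the first violation of (1) or (2) encountered in the sweep produces a box $p$ with $v(\alpha_{i_p})<0$.

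The main obstacle is the bookkeeping of the signed permutation $v$ across the whole sweep: because the column conditions couple boxes in different rows, one cannot simply treat each row in isolation, and the sign produced by a diagonal box propagates to the rows above through the subsequent multiplications. A natural way to control this is induction on $n$: the top $n-1$ rows form the staircase for $B_{n-1}$ in the coordinates $\epsilon_2,\dots,\epsilon_n$ and involve only $s_2,\dots,s_n$, so after sweeping the bottom row one is left with a $B_{n-1}$ sweep started from the signed permutation produced by row $n$; the inductive step must then track precisely how that permutation acts on the smaller root system. As a cross-check, and in fact as the quickest complete proof, one may instead invoke the classification of \reflectbox{L}-diagrams in \cite[Theorem~6.1]{LW}: by the discussion preceding the statement, Cauchon diagrams coincide with those \reflectbox{L}-diagrams after a reflection along the $x$-axis and the interchange of the two colours, and translating the conditions of \cite[Theorem~6.1]{LW} through this dictionary returns exactly (1) and (2).
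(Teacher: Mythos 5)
Your concluding paragraph is precisely the paper's own proof: the authors deduce the statement by combining M\'eriaux--Cauchon's identification of Cauchon diagrams with positive distinguished subexpressions \cite{mer1} with Lam--Williams' description of \reflectbox{L}-diagrams in \cite[Theorem 6.1]{LW}, together with the same convention translation you describe (note only that the paper's dictionary sends black boxes to their zeroes and white boxes to their crosses, rather than interchanging the two colours). The direct descent-criterion sweep sketched in your first two paragraphs would be a genuinely self-contained alternative, but you leave it as an outline with the hard bookkeeping (the propagation of the sign changes from the diagonal boxes into the rows above, and the proposed induction on $n$) unexecuted, so the complete argument you actually offer is the same one the paper gives.
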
 

We note that we can rephrase the conditions that appear in the previous theorem in a somehow simpler way. If $\Delta$ is a Cauchon diagram, then we define the {\it associated symmetric Cauchon diagram} $\Delta'$ as the $n \times n$ grid, where the lower triangular part is $\Delta$ and the upper triangular part is the reflection of $\Delta$ along the diagonal. Figure~\ref{CDexamples} provides some examples.

One easily deduces from the previous theorem the following result.

\begin{cor}
A symmetric Cauchon diagram is an $n\times n$ grid of squares, indexed in the usual way by $[n]\times[n]$, where each square is coloured black or white so that the following properties are satisfied. First, if $(i,j)$ is black, then so is $(j,i)$. Secondly, if $(i,j)$ is black, then so is $(h,j)$ for all $h\in[i]$ or so is $(i,h)$ for all $h\in[j]$. Figure~\ref{CDexamples} provides some examples.
\end{cor}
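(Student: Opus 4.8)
The plan is to show that the two conditions of the preceding theorem, imposed on the triangular diagram $\Delta$, are equivalent to the two properties of the corollary, imposed on the symmetric completion $\Delta'$. Throughout I write $(i,j)$ for the box in row $i$ and column $j$, so that $\Delta$ occupies the lower-triangular region $j\le i$, its diagonal boxes are the $(i,i)$, and row $i$ of $\Delta$ consists of $(i,1),\dots,(i,i)$. By construction $\Delta'$ is symmetric, so the first property of the corollary holds automatically and the whole content lies in translating the staircase conditions across the reflection.

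First I would record a reformulation of the theorem's conditions. The contrapositive of condition (1) reads: for every black box $(i,j)$ of $\Delta$, either all boxes strictly above it in its column, namely $(j,j),(j+1,j),\dots,(i-1,j)$ whose topmost entry is the diagonal box $(j,j)$, are black, or every box to its left, $(i,1),\dots,(i,j-1)$, is black. Condition (2) says that a black diagonal box $(i,i)$ forces $(i,1),\dots,(i,i-1)$ to be black. The key bookkeeping observation is that under the reflection defining $\Delta'$, the part of the column above $(i,j)$ lying in the \emph{upper} triangle, namely $(1,j),\dots,(j-1,j)$, is exactly the mirror image of the left-hand part of row $j$, namely $(j,1),\dots,(j,j-1)$.

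With this in hand I would prove the forward implication by cases on the position of a black box $(i,j)$ of $\Delta'$. If $(i,j)$ lies strictly below the diagonal, I apply the reformulated condition (1): should the whole triangular column above $(i,j)$ be black, its top entry is the black diagonal box $(j,j)$, so condition (2) makes $(j,1),\dots,(j,j-1)$ black, and by the observation above the upper-triangular part of the column of $(i,j)$ becomes black as well, whence the entire column above $(i,j)$ in $\Delta'$ is black; in the remaining case the left part of the row of $(i,j)$ is already black. If $(i,j)$ is a diagonal box, condition (2) directly makes its left row black. Finally, if $(i,j)$ lies strictly above the diagonal, then $(j,i)$ is black in $\Delta$, and since the reflection interchanges ``column above'' with ``row to the left'', the property established for $(j,i)$ transports to the required property for $(i,j)$. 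This yields the second property of the corollary in every case.

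For the converse I would assume $\Delta'$ satisfies the corollary and restrict attention to the lower triangle. Condition (2) is immediate: if $(i,i)$ is black then either its left row is black, which is what is wanted, or its upper column is black, and reflecting the latter again yields the left row black. For condition (1), suppose $(i,j)$ is black and sits below a white box in its column; then the ``column above is black'' alternative of the corollary is impossible, so the ``row to the left is black'' alternative must hold, which is precisely condition (1). I expect the only genuine subtlety to be the gluing step in the forward direction, where condition (2) is exactly what is needed to extend a full triangular column into a full column of the symmetric grid; once the mirror-symmetry dictionary between columns and rows is in place, the remaining cases are routine.
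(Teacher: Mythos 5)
Your proof is correct, and it carries out precisely the deduction the paper leaves to the reader (the paper offers no proof beyond ``one easily deduces''): rewriting condition (1) of the theorem in its either/or contrapositive form, using condition (2) to extend a fully black triangular column past the diagonal box $(j,j)$ into the reflected upper part, and handling the diagonal and upper-triangular boxes by symmetry. The converse direction is likewise the intended routine check, so nothing is missing.
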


\begin{figure}[htbp]
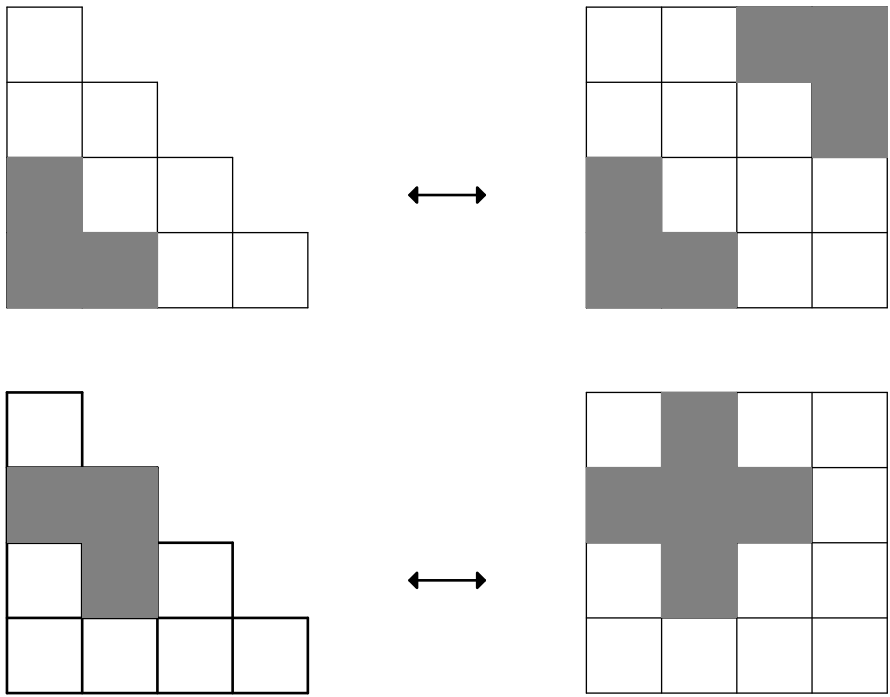


\begin{center}
\begin{tabular}{ccc}
\begin{pgfpicture}{0cm}{0cm}{5cm}{5cm}%
\pgfsetroundjoin \pgfsetroundcap%
\pgfsetlinewidth{0.5pt} 
\pgfxyline(0.5,0.5)(4.5,0.5)
\pgfxyline(0.5,1.5)(4.5,1.5)
\pgfxyline(0.5,2.5)(3.5,2.5)
\pgfxyline(0.5,3.5)(2.5,3.5)
\pgfxyline(0.5,4.5)(1.5,4.5)
\pgfxyline(0.5,0.5)(0.5,4.5)
\pgfxyline(1.5,0.5)(1.5,4.5)
\pgfxyline(2.5,0.5)(2.5,3.5)
\pgfxyline(3.5,0.5)(3.5,2.5)
\pgfxyline(4.5,0.5)(4.5,1.5)
\pgfsetfillcolor{gray}
\pgfmoveto{\pgfxy(0.5,1.5)}\pgflineto{\pgfxy(0.5,2.5)}\pgflineto{\pgfxy(1.5,2.5)}\pgflineto{\pgfxy(1.5,1.5)}\pgflineto{\pgfxy(0.5,1.5)}\pgffill
\pgfmoveto{\pgfxy(0.5,0.5)}\pgflineto{\pgfxy(0.5,1.5)}\pgflineto{\pgfxy(1.5,1.5)}\pgflineto{\pgfxy(1.5,0.5)}\pgflineto{\pgfxy(0.5,0.5)}\pgffill
\pgfmoveto{\pgfxy(1.5,0.5)}\pgflineto{\pgfxy(1.5,1.5)}\pgflineto{\pgfxy(2.5,1.5)}\pgflineto{\pgfxy(2.5,0.5)}\pgflineto{\pgfxy(1.5,0.5)}\pgffill
\end{pgfpicture} 
 & 
\begin{pgfpicture}{0cm}{0cm}{2cm}{4cm}%
\pgfsetroundjoin \pgfsetroundcap%
\pgfsetlinewidth{1pt} 
\pgfxyline(0.5,2)(1.5,2)
\pgfmoveto{\pgfxy(0.5,2)}\pgflineto{\pgfxy(0.6,2.1)}\pgflineto{\pgfxy(0.6,1.9)}\pgflineto{\pgfxy(0.5,2)}\pgfclosepath\pgffillstroke
\pgfmoveto{\pgfxy(1.5,2)}\pgflineto{\pgfxy(1.4,2.1)}\pgflineto{\pgfxy(1.4,1.9)}\pgflineto{\pgfxy(1.5,2)}\pgfclosepath\pgffillstroke
\end{pgfpicture}
 &

\begin{pgfpicture}{0cm}{0cm}{5cm}{5cm}%
\pgfsetroundjoin \pgfsetroundcap%
\pgfsetlinewidth{0.5pt} 
\pgfxyline(0.5,0.5)(4.5,0.5)
\pgfxyline(0.5,1.5)(4.5,1.5)
\pgfxyline(0.5,2.5)(4.5,2.5)
\pgfxyline(0.5,3.5)(4.5,3.5)
\pgfxyline(0.5,4.5)(4.5,4.5)
\pgfxyline(0.5,0.5)(0.5,4.5)
\pgfxyline(1.5,0.5)(1.5,4.5)
\pgfxyline(2.5,0.5)(2.5,4.5)
\pgfxyline(3.5,0.5)(3.5,4.5)
\pgfxyline(4.5,0.5)(4.5,4.5)
\pgfsetfillcolor{gray}
\pgfmoveto{\pgfxy(0.5,1.5)}\pgflineto{\pgfxy(0.5,2.5)}\pgflineto{\pgfxy(1.5,2.5)}\pgflineto{\pgfxy(1.5,1.5)}\pgflineto{\pgfxy(0.5,1.5)}\pgffill
\pgfmoveto{\pgfxy(0.5,0.5)}\pgflineto{\pgfxy(0.5,1.5)}\pgflineto{\pgfxy(1.5,1.5)}\pgflineto{\pgfxy(1.5,0.5)}\pgflineto{\pgfxy(0.5,0.5)}\pgffill
\pgfmoveto{\pgfxy(1.5,0.5)}\pgflineto{\pgfxy(1.5,1.5)}\pgflineto{\pgfxy(2.5,1.5)}\pgflineto{\pgfxy(2.5,0.5)}\pgflineto{\pgfxy(1.5,0.5)}\pgffill
\pgfmoveto{\pgfxy(2.5,3.5)}\pgflineto{\pgfxy(2.5,4.5)}\pgflineto{\pgfxy(3.5,4.5)}\pgflineto{\pgfxy(3.5,3.5)}\pgflineto{\pgfxy(2.5,3.5)}\pgffill
\pgfmoveto{\pgfxy(3.5,3.5)}\pgflineto{\pgfxy(3.5,4.5)}\pgflineto{\pgfxy(4.5,4.5)}\pgflineto{\pgfxy(4.5,3.5)}\pgflineto{\pgfxy(3.5,3.5)}\pgffill
\pgfmoveto{\pgfxy(3.5,2.5)}\pgflineto{\pgfxy(3.5,3.5)}\pgflineto{\pgfxy(4.5,3.5)}\pgflineto{\pgfxy(4.5,2.5)}\pgflineto{\pgfxy(3.5,2.5)}\pgffill
\end{pgfpicture} 
\\
\begin{pgfpicture}{0cm}{0cm}{5cm}{5cm}%
\pgfsetroundjoin \pgfsetroundcap%
\pgfsetlinewidth{1pt} 
\pgfxyline(0.5,0.5)(4.5,0.5)
\pgfxyline(0.5,1.5)(4.5,1.5)
\pgfxyline(0.5,2.5)(3.5,2.5)
\pgfxyline(0.5,3.5)(2.5,3.5)
\pgfxyline(0.5,4.5)(1.5,4.5)
\pgfxyline(0.5,0.5)(0.5,4.5)
\pgfxyline(1.5,0.5)(1.5,4.5)
\pgfxyline(2.5,0.5)(2.5,3.5)
\pgfxyline(3.5,0.5)(3.5,2.5)
\pgfxyline(4.5,0.5)(4.5,1.5)
\pgfsetfillcolor{gray}
\pgfmoveto{\pgfxy(0.5,2.5)}\pgflineto{\pgfxy(0.5,3.5)}\pgflineto{\pgfxy(1.5,3.5)}\pgflineto{\pgfxy(1.5,2.5)}\pgflineto{\pgfxy(0.5,2.5)}\pgffill
\pgfmoveto{\pgfxy(1.5,2.5)}\pgflineto{\pgfxy(1.5,3.5)}\pgflineto{\pgfxy(2.5,3.5)}\pgflineto{\pgfxy(2.5,2.5)}\pgflineto{\pgfxy(1.5,2.5)}\pgffill
\pgfmoveto{\pgfxy(1.5,1.5)}\pgflineto{\pgfxy(1.5,2.5)}\pgflineto{\pgfxy(2.5,2.5)}\pgflineto{\pgfxy(2.5,1.5)}\pgflineto{\pgfxy(1.5,1.5)}\pgffill
\end{pgfpicture} 
 & 
\begin{pgfpicture}{0cm}{0cm}{2cm}{4cm}%
\pgfsetroundjoin \pgfsetroundcap%
\pgfsetlinewidth{1pt} 
\pgfxyline(0.5,2)(1.5,2)
\pgfmoveto{\pgfxy(0.5,2)}\pgflineto{\pgfxy(0.6,2.1)}\pgflineto{\pgfxy(0.6,1.9)}\pgflineto{\pgfxy(0.5,2)}\pgfclosepath\pgffillstroke
\pgfmoveto{\pgfxy(1.5,2)}\pgflineto{\pgfxy(1.4,2.1)}\pgflineto{\pgfxy(1.4,1.9)}\pgflineto{\pgfxy(1.5,2)}\pgfclosepath\pgffillstroke
\end{pgfpicture}
 &

\begin{pgfpicture}{0cm}{0cm}{5cm}{5cm}%
\pgfsetroundjoin \pgfsetroundcap%
\pgfsetlinewidth{0.5pt} 
\pgfxyline(0.5,0.5)(4.5,0.5)
\pgfxyline(0.5,1.5)(4.5,1.5)
\pgfxyline(0.5,2.5)(4.5,2.5)
\pgfxyline(0.5,3.5)(4.5,3.5)
\pgfxyline(0.5,4.5)(4.5,4.5)
\pgfxyline(0.5,0.5)(0.5,4.5)
\pgfxyline(1.5,0.5)(1.5,4.5)
\pgfxyline(2.5,0.5)(2.5,4.5)
\pgfxyline(3.5,0.5)(3.5,4.5)
\pgfxyline(4.5,0.5)(4.5,4.5)
\pgfsetfillcolor{gray}
\pgfmoveto{\pgfxy(0.5,2.5)}\pgflineto{\pgfxy(0.5,3.5)}\pgflineto{\pgfxy(1.5,3.5)}\pgflineto{\pgfxy(1.5,2.5)}\pgflineto{\pgfxy(0.5,2.5)}\pgffill
\pgfmoveto{\pgfxy(1.5,2.5)}\pgflineto{\pgfxy(1.5,3.5)}\pgflineto{\pgfxy(2.5,3.5)}\pgflineto{\pgfxy(2.5,2.5)}\pgflineto{\pgfxy(1.5,2.5)}\pgffill
\pgfmoveto{\pgfxy(1.5,1.5)}\pgflineto{\pgfxy(1.5,2.5)}\pgflineto{\pgfxy(2.5,2.5)}\pgflineto{\pgfxy(2.5,1.5)}\pgflineto{\pgfxy(1.5,1.5)}\pgffill
\pgfmoveto{\pgfxy(1.5,3.5)}\pgflineto{\pgfxy(1.5,4.5)}\pgflineto{\pgfxy(2.5,4.5)}\pgflineto{\pgfxy(2.5,3.5)}\pgflineto{\pgfxy(1.5,3.5)}\pgffill
\pgfmoveto{\pgfxy(2.5,2.5)}\pgflineto{\pgfxy(2.5,3.5)}\pgflineto{\pgfxy(3.5,3.5)}\pgflineto{\pgfxy(3.5,2.5)}\pgflineto{\pgfxy(2.5,2.5)}\pgffill
\end{pgfpicture} 

 \end{tabular}
\end{center}

\caption{Two Cauchon diagrams and their associated symmetric Cauchon diagrams}
\label{CDexamples}

\end{figure}

\begin{notn}
If $\Delta$ is a Cauchon diagram of type $B_n$, we denote the corresponding symmetric Cauchon diagram by $\Delta'$.
\end{notn}

\subsection{Signed Permutations}\label{pipes}
It is well known (e.g., see~\cite{comb}) that for a simple Lie algebra of type $B_n$, the Weyl group $W$ is precisely the group $S^B_{2n}$ of \emph{signed permutations}, that is, permutations $\sigma$ of the set $[n]\cup (-[n])$ where $$\sigma(i)=-\sigma(-i)$$ for all $i\in[n]$. Given a Cauchon diagram $\Delta$ and its associated symmetric Cauchon diagram $\Delta'$, we obtain a signed permutation $\tau_\Delta$ by using the method of \emph{pipe-dreams} that we now describe.  First, label the rows of $\Delta'$ by $1,2,\ldots, n$ (starting from the bottom-most row) and label columns by $-n,-(n-1),\ldots,-1$ (starting from the left-most column). Next place pipes on the squares of $\Delta'$ as follows. Each white square receives an ``oriented pair of elbows'', while each black square receives an ``oriented cross''. The situation for a specific example is pictured in Figure~\ref{pipedreamsexample}.

\begin{figure}[htbp]
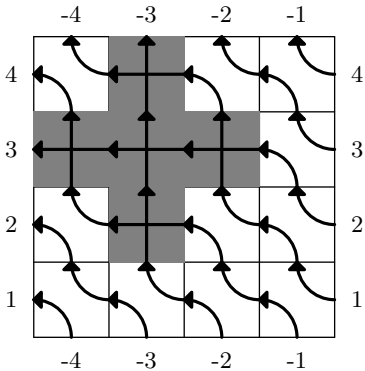

\begin{center}
\begin{pgfpicture}{0cm}{0cm}{5.5cm}{5.5cm}%
\pgfsetroundjoin \pgfsetroundcap%
\pgfsetlinewidth{0.5pt} 
\pgfxyline(0.5,0.5)(4.5,0.5)
\pgfxyline(0.5,1.5)(4.5,1.5)
\pgfxyline(0.5,2.5)(4.5,2.5)
\pgfxyline(0.5,3.5)(4.5,3.5)
\pgfxyline(0.5,4.5)(4.5,4.5)
\pgfxyline(0.5,0.5)(0.5,4.5)
\pgfxyline(1.5,0.5)(1.5,4.5)
\pgfxyline(2.5,0.5)(2.5,4.5)
\pgfxyline(3.5,0.5)(3.5,4.5)
\pgfxyline(4.5,0.5)(4.5,4.5)
\pgfsetfillcolor{gray}
\pgfmoveto{\pgfxy(0.5,2.5)}\pgflineto{\pgfxy(0.5,3.5)}\pgflineto{\pgfxy(1.5,3.5)}\pgflineto{\pgfxy(1.5,2.5)}\pgflineto{\pgfxy(0.5,2.5)}\pgffill
\pgfmoveto{\pgfxy(1.5,2.5)}\pgflineto{\pgfxy(1.5,3.5)}\pgflineto{\pgfxy(2.5,3.5)}\pgflineto{\pgfxy(2.5,2.5)}\pgflineto{\pgfxy(1.5,2.5)}\pgffill
\pgfmoveto{\pgfxy(1.5,1.5)}\pgflineto{\pgfxy(1.5,2.5)}\pgflineto{\pgfxy(2.5,2.5)}\pgflineto{\pgfxy(2.5,1.5)}\pgflineto{\pgfxy(1.5,1.5)}\pgffill
\pgfmoveto{\pgfxy(1.5,3.5)}\pgflineto{\pgfxy(1.5,4.5)}\pgflineto{\pgfxy(2.5,4.5)}\pgflineto{\pgfxy(2.5,3.5)}\pgflineto{\pgfxy(1.5,3.5)}\pgffill
\pgfmoveto{\pgfxy(2.5,2.5)}\pgflineto{\pgfxy(2.5,3.5)}\pgflineto{\pgfxy(3.5,3.5)}\pgflineto{\pgfxy(3.5,2.5)}\pgflineto{\pgfxy(2.5,2.5)}\pgffill

\pgfsetlinewidth{1.2pt} 
\color{black}
\pgfmoveto{\pgfxy(0.5,4)}\pgfpatharc{90}{0}{0.5cm}
\pgfstroke
\pgfmoveto{\pgfxy(0.5,4)}\pgflineto{\pgfxy(0.6,4.1)}\pgflineto{\pgfxy(0.6,3.9)}\pgflineto{\pgfxy(0.5,4)}\pgfclosepath\pgffillstroke
\color{black}
\pgfmoveto{\pgfxy(1,4.5)}\pgfpatharc{180}{270}{0.5cm}
\pgfstroke
\pgfmoveto{\pgfxy(1,4.5)}\pgflineto{\pgfxy(0.9,4.4)}\pgflineto{\pgfxy(1.1,4.4)}\pgflineto{\pgfxy(1,4.5)}\pgfclosepath\pgffillstroke

\pgfsetlinewidth{1.2pt} 
\color{black} 
\pgfxyline(2,3.5)(2,4.5)
\pgfmoveto{\pgfxy(1.9,4.4)}\pgflineto{\pgfxy(2,4.5)}\pgflineto{\pgfxy(2.1,4.4)}\pgflineto{\pgfxy(1.9,4.4)}\pgfclosepath\pgffillstroke
\color{black}
\pgfxyline(1.5,4)(2.5,4)
\pgfmoveto{\pgfxy(1.5,4)}\pgflineto{\pgfxy(1.6,4.1)}\pgflineto{\pgfxy(1.6,3.9)}\pgflineto{\pgfxy(1.5,4)}\pgfclosepath\pgffillstroke

\pgfsetlinewidth{1.2pt} 
\color{black}
\pgfmoveto{\pgfxy(2.5,4)}\pgfpatharc{90}{0}{0.5cm}
\pgfstroke
\pgfmoveto{\pgfxy(2.5,4)}\pgflineto{\pgfxy(2.6,4.1)}\pgflineto{\pgfxy(2.6,3.9)}\pgflineto{\pgfxy(2.5,4)}\pgfclosepath\pgffillstroke
\color{black}
\pgfmoveto{\pgfxy(3,4.5)}\pgfpatharc{180}{270}{0.5cm}
\pgfstroke
\pgfmoveto{\pgfxy(3,4.5)}\pgflineto{\pgfxy(2.9,4.4)}\pgflineto{\pgfxy(3.1,4.4)}\pgflineto{\pgfxy(3,4.5)}\pgfclosepath\pgffillstroke

\pgfsetlinewidth{1.2pt} 
\color{black}
\pgfmoveto{\pgfxy(3.5,4)}\pgfpatharc{90}{0}{0.5cm}
\pgfstroke
\pgfmoveto{\pgfxy(3.5,4)}\pgflineto{\pgfxy(3.6,4.1)}\pgflineto{\pgfxy(3.6,3.9)}\pgflineto{\pgfxy(3.5,4)}\pgfclosepath\pgffillstroke
\color{black}
\pgfmoveto{\pgfxy(4,4.5)}\pgfpatharc{180}{270}{0.5cm}
\pgfstroke
\pgfmoveto{\pgfxy(4,4.5)}\pgflineto{\pgfxy(3.9,4.4)}\pgflineto{\pgfxy(4.1,4.4)}\pgflineto{\pgfxy(4,4.5)}\pgfclosepath\pgffillstroke

\pgfsetlinewidth{1.2pt} 
\color{black} 
\pgfxyline(1,2.5)(1,3.5)
\pgfmoveto{\pgfxy(0.9,3.4)}\pgflineto{\pgfxy(1,3.5)}\pgflineto{\pgfxy(1.1,3.4)}\pgflineto{\pgfxy(0.9,3.4)}\pgfclosepath\pgffillstroke
\color{black}
\pgfxyline(0.5,3)(1.5,3)
\pgfmoveto{\pgfxy(0.5,3)}\pgflineto{\pgfxy(0.6,3.1)}\pgflineto{\pgfxy(0.6,2.9)}\pgflineto{\pgfxy(0.5,3)}\pgfclosepath\pgffillstroke

\pgfsetlinewidth{1.2pt} 
\color{black} 
\pgfxyline(2,2.5)(2,3.5)
\pgfmoveto{\pgfxy(1.9,3.4)}\pgflineto{\pgfxy(2,3.5)}\pgflineto{\pgfxy(2.1,3.4)}\pgflineto{\pgfxy(1.9,3.4)}\pgfclosepath\pgffillstroke
\color{black}
\pgfxyline(1.5,3)(2.5,3)
\pgfmoveto{\pgfxy(1.5,3)}\pgflineto{\pgfxy(1.6,3.1)}\pgflineto{\pgfxy(1.6,2.9)}\pgflineto{\pgfxy(1.5,3)}\pgfclosepath\pgffillstroke

\pgfsetlinewidth{1.2pt} 
\color{black} 
\pgfxyline(3,2.5)(3,3.5)
\pgfmoveto{\pgfxy(2.9,3.4)}\pgflineto{\pgfxy(3,3.5)}\pgflineto{\pgfxy(3.1,3.4)}\pgflineto{\pgfxy(2.9,3.4)}\pgfclosepath\pgffillstroke
\color{black}
\pgfxyline(2.5,3)(3.5,3)
\pgfmoveto{\pgfxy(2.5,3)}\pgflineto{\pgfxy(2.6,3.1)}\pgflineto{\pgfxy(2.6,2.9)}\pgflineto{\pgfxy(2.5,3)}\pgfclosepath\pgffillstroke

\pgfsetlinewidth{1.2pt} 
\color{black}
\pgfmoveto{\pgfxy(3.5,3)}\pgfpatharc{90}{0}{0.5cm}
\pgfstroke
\pgfmoveto{\pgfxy(3.5,3)}\pgflineto{\pgfxy(3.6,3.1)}\pgflineto{\pgfxy(3.6,2.9)}\pgflineto{\pgfxy(3.5,3)}\pgfclosepath\pgffillstroke
\color{black}
\pgfmoveto{\pgfxy(4,3.5)}\pgfpatharc{180}{270}{0.5cm}
\pgfstroke
\pgfmoveto{\pgfxy(4,3.5)}\pgflineto{\pgfxy(3.9,3.4)}\pgflineto{\pgfxy(4.1,3.4)}\pgflineto{\pgfxy(4,3.5)}\pgfclosepath\pgffillstroke

\pgfsetlinewidth{1.2pt} 
\color{black}
\pgfmoveto{\pgfxy(0.5,2)}\pgfpatharc{90}{0}{0.5cm}
\pgfstroke
\pgfmoveto{\pgfxy(0.5,2)}\pgflineto{\pgfxy(0.6,2.1)}\pgflineto{\pgfxy(0.6,1.9)}\pgflineto{\pgfxy(0.5,2)}\pgfclosepath\pgffillstroke
\color{black}
\pgfmoveto{\pgfxy(1,2.5)}\pgfpatharc{180}{270}{0.5cm}
\pgfstroke
\pgfmoveto{\pgfxy(1,2.5)}\pgflineto{\pgfxy(0.9,2.4)}\pgflineto{\pgfxy(1.1,2.4)}\pgflineto{\pgfxy(1,2.5)}\pgfclosepath\pgffillstroke

\pgfsetlinewidth{1.2pt} 
\color{black} 
\pgfxyline(2,1.5)(2,2.5)
\pgfmoveto{\pgfxy(1.9,2.4)}\pgflineto{\pgfxy(2,2.5)}\pgflineto{\pgfxy(2.1,2.4)}\pgflineto{\pgfxy(1.9,2.4)}\pgfclosepath\pgffillstroke
\color{black}
\pgfxyline(1.5,2)(2.5,2)
\pgfmoveto{\pgfxy(1.5,2)}\pgflineto{\pgfxy(1.6,2.1)}\pgflineto{\pgfxy(1.6,1.9)}\pgflineto{\pgfxy(1.5,2)}\pgfclosepath\pgffillstroke

\pgfsetlinewidth{1.2pt} 
\color{black}
\pgfmoveto{\pgfxy(2.5,2)}\pgfpatharc{90}{0}{0.5cm}
\pgfstroke
\pgfmoveto{\pgfxy(2.5,2)}\pgflineto{\pgfxy(2.6,2.1)}\pgflineto{\pgfxy(2.6,1.9)}\pgflineto{\pgfxy(2.5,2)}\pgfclosepath\pgffillstroke
\color{black}
\pgfmoveto{\pgfxy(3,2.5)}\pgfpatharc{180}{270}{0.5cm}
\pgfstroke
\pgfmoveto{\pgfxy(3,2.5)}\pgflineto{\pgfxy(2.9,2.4)}\pgflineto{\pgfxy(3.1,2.4)}\pgflineto{\pgfxy(3,2.5)}\pgfclosepath\pgffillstroke

\pgfsetlinewidth{1.2pt} 
\color{black}
\pgfmoveto{\pgfxy(3.5,2)}\pgfpatharc{90}{0}{0.5cm}
\pgfstroke
\pgfmoveto{\pgfxy(3.5,2)}\pgflineto{\pgfxy(3.6,2.1)}\pgflineto{\pgfxy(3.6,1.9)}\pgflineto{\pgfxy(3.5,2)}\pgfclosepath\pgffillstroke
\color{black}
\pgfmoveto{\pgfxy(4,2.5)}\pgfpatharc{180}{270}{0.5cm}
\pgfstroke
\pgfmoveto{\pgfxy(4,2.5)}\pgflineto{\pgfxy(3.9,2.4)}\pgflineto{\pgfxy(4.1,2.4)}\pgflineto{\pgfxy(4,2.5)}\pgfclosepath\pgffillstroke

\pgfsetlinewidth{1.2pt} 
\color{black}
\pgfmoveto{\pgfxy(0.5,1)}\pgfpatharc{90}{0}{0.5cm}
\pgfstroke
\pgfmoveto{\pgfxy(0.5,1)}\pgflineto{\pgfxy(0.6,1.1)}\pgflineto{\pgfxy(0.6,0.9)}\pgflineto{\pgfxy(0.5,1)}\pgfclosepath\pgffillstroke
\color{black}
\pgfmoveto{\pgfxy(1,1.5)}\pgfpatharc{180}{270}{0.5cm}
\pgfstroke
\pgfmoveto{\pgfxy(1,1.5)}\pgflineto{\pgfxy(0.9,1.4)}\pgflineto{\pgfxy(1.1,1.4)}\pgflineto{\pgfxy(1,1.5)}\pgfclosepath\pgffillstroke

\pgfsetlinewidth{1.2pt} 
\color{black}
\pgfmoveto{\pgfxy(1.5,1)}\pgfpatharc{90}{0}{0.5cm}
\pgfstroke
\pgfmoveto{\pgfxy(1.5,1)}\pgflineto{\pgfxy(1.6,1.1)}\pgflineto{\pgfxy(1.6,0.9)}\pgflineto{\pgfxy(1.5,1)}\pgfclosepath\pgffillstroke
\color{black}
\pgfmoveto{\pgfxy(2,1.5)}\pgfpatharc{180}{270}{0.5cm}
\pgfstroke
\pgfmoveto{\pgfxy(2,1.5)}\pgflineto{\pgfxy(1.9,1.4)}\pgflineto{\pgfxy(2.1,1.4)}\pgflineto{\pgfxy(2,1.5)}\pgfclosepath\pgffillstroke

\pgfsetlinewidth{1.2pt} 
\color{black}
\pgfmoveto{\pgfxy(2.5,1)}\pgfpatharc{90}{0}{0.5cm}
\pgfstroke
\pgfmoveto{\pgfxy(2.5,1)}\pgflineto{\pgfxy(2.6,1.1)}\pgflineto{\pgfxy(2.6,0.9)}\pgflineto{\pgfxy(2.5,1)}\pgfclosepath\pgffillstroke
\color{black}
\pgfmoveto{\pgfxy(3,1.5)}\pgfpatharc{180}{270}{0.5cm}
\pgfstroke
\pgfmoveto{\pgfxy(3,1.5)}\pgflineto{\pgfxy(2.9,1.4)}\pgflineto{\pgfxy(3.1,1.4)}\pgflineto{\pgfxy(3,1.5)}\pgfclosepath\pgffillstroke

\pgfsetlinewidth{1.2pt} 
\color{black}
\pgfmoveto{\pgfxy(3.5,1)}\pgfpatharc{90}{0}{0.5cm}
\pgfstroke
\pgfmoveto{\pgfxy(3.5,1)}\pgflineto{\pgfxy(3.6,1.1)}\pgflineto{\pgfxy(3.6,0.9)}\pgflineto{\pgfxy(3.5,1)}\pgfclosepath\pgffillstroke
\color{black}
\pgfmoveto{\pgfxy(4,1.5)}\pgfpatharc{180}{270}{0.5cm}
\pgfstroke
\pgfmoveto{\pgfxy(4,1.5)}\pgflineto{\pgfxy(3.9,1.4)}\pgflineto{\pgfxy(4.1,1.4)}\pgflineto{\pgfxy(4,1.5)}\pgfclosepath\pgffillstroke

\pgfputat{\pgfxy(0.2,1)}{\pgfnode{rectangle}{center}{\color{black}\small 1}{}{\pgfusepath{}}}
\pgfputat{\pgfxy(0.2,2)}{\pgfnode{rectangle}{center}{\color{black}\small 2}{}{\pgfusepath{}}}
\pgfputat{\pgfxy(0.2,3)}{\pgfnode{rectangle}{center}{\color{black}\small 3}{}{\pgfusepath{}}}
\pgfputat{\pgfxy(0.2,4)}{\pgfnode{rectangle}{center}{\color{black}\small 4}{}{\pgfusepath{}}}

\pgfputat{\pgfxy(4.8,1)}{\pgfnode{rectangle}{center}{\color{black}\small 1}{}{\pgfusepath{}}}
\pgfputat{\pgfxy(4.8,2)}{\pgfnode{rectangle}{center}{\color{black}\small 2}{}{\pgfusepath{}}}
\pgfputat{\pgfxy(4.8,3)}{\pgfnode{rectangle}{center}{\color{black}\small 3}{}{\pgfusepath{}}}
\pgfputat{\pgfxy(4.8,4)}{\pgfnode{rectangle}{center}{\color{black}\small 4}{}{\pgfusepath{}}}

\pgfputat{\pgfxy(1,0.2)}{\pgfnode{rectangle}{center}{\color{black}\small -4}{}{\pgfusepath{}}}
\pgfputat{\pgfxy(2,0.2)}{\pgfnode{rectangle}{center}{\color{black}\small -3}{}{\pgfusepath{}}}
\pgfputat{\pgfxy(3,0.2)}{\pgfnode{rectangle}{center}{\color{black}\small -2}{}{\pgfusepath{}}}
\pgfputat{\pgfxy(4,0.2)}{\pgfnode{rectangle}{center}{\color{black}\small -1}{}{\pgfusepath{}}}

\pgfputat{\pgfxy(1,4.8)}{\pgfnode{rectangle}{center}{\color{black}\small -4}{}{\pgfusepath{}}}
\pgfputat{\pgfxy(2,4.8)}{\pgfnode{rectangle}{center}{\color{black}\small -3}{}{\pgfusepath{}}}
\pgfputat{\pgfxy(3,4.8)}{\pgfnode{rectangle}{center}{\color{black}\small -2}{}{\pgfusepath{}}}
\pgfputat{\pgfxy(4,4.8)}{\pgfnode{rectangle}{center}{\color{black}\small -1}{}{\pgfusepath{}}}

\end{pgfpicture} 

\caption{Example of applying pipe-dreams to a symmetric Cauchon diagram}
\label{pipedreamsexample}
\end{center}
\end{figure}

The signed permutation $\tau_\Delta$ sends $i$ (on the bottom or right side of $\Delta$) to its image $\tau_\Delta(i)$ (on the top or left side of $\Delta$) by following the pipe starting at $i$. Note that when traversing a black square, we always go directly across the square. For the example in Figure~\ref{pipedreamsexample}, we find that $\tau_\Delta=(1\,\,-4)(-1\,\,4)(2\,\,3\,\,-2\,\,-3)$ (where we have written the permutation in the standard disjoint cycle notation). We omit the proof of the following lemma but it may be justified by arguments similar to those in~\cite{BCLqm}.

\begin{lem}\label{lem1}
For a Cauchon diagram $\Delta$, we have $\tau_\Delta=w^\Delta w^{-1}$.
\end{lem}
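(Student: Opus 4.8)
The plan is to compute $\tau_\Delta$ directly from the pipe dream on $\Delta'$ and to match it against the subword product $w^\Delta$. The first step is to record the dictionary between the boxes of the staircase carrying the reduced expression $w=s_n(s_{n-1}s_n)\cdots(s_1\cdots s_n)$ and the letters $s_{i_k}$: reading the staircase from top to bottom and left to right, the $k$-th row from the top carries $s_{n-k+1},\dots,s_n$, so that the last box of each row---equivalently, the boxes lying on the anti-diagonal of $\Delta'$---always carries the short reflection $s_n=(n,-n)$, while every off-diagonal box carries a braid reflection $s_i=(i,i+1)(-i,-(i+1))$ with $i<n$. This observation is what flags the genuinely new feature of type $B_n$: a black box on the anti-diagonal must produce a sign change, a phenomenon absent in the type $A$ treatment of~\cite{BCLqm}.

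I would then dispose of the base case $\Delta=\emptyset$. Here every box is an elbow, and tracing the strand issued from the bottom of column $j$ gives the zig-zag (left, up, left, up, \dots) that leaves the grid on the left in row $j$; in the boundary labels this reads $\tau_\emptyset\colon i\mapsto i-(n+1)$ for $i\in[n]$, i.e.\ $\tau_\emptyset$ is the signed involution sending $\{1,\dots,n\}$ onto $\{-n,\dots,-1\}$ in reverse order. Since $w=w_0w_{0,J}$ sends $i\mapsto-(n+1-i)=i-(n+1)$---being the product of $w_0\colon i\mapsto-i$ with the longest element $w_{0,J}\colon i\mapsto n+1-i$ of $W_J$---we obtain $\tau_\emptyset=w=w^{-1}$. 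As $w^{\emptyset}=\mathrm{id}$, this is exactly the asserted identity for the empty diagram and also pins down the ``background'' permutation carried by the elbows.

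With the base case fixed, I would establish the general identity by induction on the reduced word, switching boxes between elbow and cross one at a time and following how the strands reroute. The governing principle is the usual wiring-diagram dictionary: a cross forces the two strands through that box to cross and contributes the attached reflection $s_{i_k}$, an elbow does not, and the accumulated product, read in the order of the word, together with the elbow routing of $\tau_\emptyset=w^{-1}$, yields $\tau_\Delta=w^\Delta w^{-1}$. The distinctness of the roots $\beta_1,\dots,\beta_t$ guarantees that the crossings are those of a subexpression of a reduced word, so that no spurious transposition occurs. The symmetry of $\Delta'$ under the anti-diagonal reflection $(i,j)\mapsto(n+1-j,n+1-i)$ enters twice: because this reflection negates the boundary labels it shows at once that $\tau_\Delta(-a)=-\tau_\Delta(a)$, so $\tau_\Delta$ is a bona fide signed permutation; and it pairs each off-diagonal black box with its mirror, the two crossings together supplying both $2$-cycles $(i,i+1)$ and $(-i,-(i+1))$ of $s_i$, whereas a black box on the anti-diagonal is its own mirror and supplies the lone sign change of $s_n$.

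The main obstacle is precisely this last bookkeeping: reconciling the two-dimensional order in which the pipe dream produces its crossings with the one-dimensional order of $s_{i_1},\dots,s_{i_t}$, and verifying that the mirrored and self-mirrored crossings assemble into honest type-$B_n$ reflections rather than extra transpositions. In type $A$ each cross contributes a single adjacent transposition and the mirror plays no role, so the corresponding step of~\cite{BCLqm} is lighter; here one must control the interaction of a strand with its own negative as it meets the anti-diagonal. I expect this to be manageable by phrasing the induction through the partial products $v_\ell^\Delta=s_{i_t}^\Delta\cdots s_{i_{t-\ell+1}}^\Delta$ and reading the diagram so that peeling the last box preserves the symmetry of $\Delta'$, but it is the step demanding genuine care.
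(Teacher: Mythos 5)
First, a remark on the comparison: the paper offers \emph{no} proof of this lemma at all --- it states only that the claim ``may be justified by arguments similar to those in \cite{BCLqm}'' --- so your attempt is being measured against an intended argument rather than a written one. Your strategy is the intended one, and the parts you actually carry out check out: the dictionary between boxes and letters (anti-diagonal boxes carry $s_n=(n,-n)$, off-diagonal boxes carry $s_i=(i,i+1)(-i,-(i+1))$), the base case $\tau_\emptyset=w_0w_{0,J}=w=w^{-1}$, and the observation that the reflection $(i,j)\mapsto(n+1-j,n+1-i)$ negates the boundary labels and hence forces $\tau_\Delta(-a)=-\tau_\Delta(a)$ are all correct. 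The identity also agrees with the paper's worked example, where $w^\Delta=s_3s_4s_3=(3,-3)$ and $w^\Delta w^{-1}=(1\ {-4})({-1}\ 4)(2\ 3\ {-2}\ {-3})=\tau_\Delta$.

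The gap is in the induction step, which is asserted rather than proved, and which as literally formulated would not close. Your ``governing principle'' --- each cross contributes $s_{i_k}$, the contributions accumulate in word order, and together with the elbow background $w^{-1}$ this yields $w^\Delta w^{-1}$ --- is a restatement of the conclusion, not an argument for it. Concretely: if you toggle a single \emph{interior} box $k$ (together with its mirror), the algebraic effect is to insert or delete $s_{i_k}$ in the \emph{middle} of the product $w^\Delta=s_{i_1}^\Delta\cdots s_{i_t}^\Delta$, so $\tau_\Delta$ does not change by left or right multiplication by $s_{i_k}$; geometrically, the toggle composes $\tau_\Delta$ with the transposition pair $(a\ b)({-a}\ {-b})$ of the \emph{sources} of the two strands passing through that box, and these sources depend on the colouring of the rest of the diagram. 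Matching that conjugated reflection against the inserted letter is exactly the nontrivial content of the lemma. To make the induction one-sided you must toggle or peel boxes only at an end of the reduced word --- strip the grid one hook at a time, a staircase row together with its mirror column, which reduces $B_n$ to $B_{n-1}$ and reduces $w^\Delta w^{-1}$ by the last block $s_1\cdots s_n$ of the word --- and then verify the local rerouting claim: a white mirror pair supplies both $2$-cycles of $s_i$, a white anti-diagonal box supplies $(n,-n)$, and black boxes supply nothing, all at the correct end of the product. Your closing sentence about the partial products $v_\ell^\Delta$ points at this fix, but you explicitly leave that verification as ``expected to be manageable''; since it is the entire substance of the lemma, the proof is not complete as written.
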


\subsection{A Basis for $\ker(w^\Delta+w)$} \label{types}
We now show how one may use cycles of $\tau_\Delta$ to calculate $\dim(\ker(w^\Delta+w))$. For a sequence $R=(a_1,a_2,\ldots,a_k)$, define $-R=(-a_1,-a_2,\ldots,-a_k)$. The cycle structure of $\tau_\Delta$ is easily seen to be a disjoint union of cycles that we group into the following types:

\begin{itemize}
\item[Type (a).] Pairs of fixed points $(i)(-i)$. (Such pairs occur if and only if $i$ is an all-black row in $\Delta$.)
\item[Type (b).] A pair of cycles of the form $(R_1,-R_2,R_3,\ldots,-R_{2m})(-R_1,R_2,\ldots,R_{2m})$, for some $m\geq 1$, where $\cup_{i=1}^{2m} R_i\subseteq [n]$ is a disjoint union, and, without loss of generality, each sequence $R_i$ of positive integers is monotone increasing.
\item[Type (c).] A single cycle of the form $(R_1,-R_2,\ldots,R_{2m-1},-R_1,R_2,\ldots,-R_{2m-1})$ for some $m\geq 1$, where $\cup_{i=1}^{2m-1} R_i\subseteq [n]$ is a disjoint union, and, without loss of generality, each sequence $R_i$ of positive integers is monotone increasing.
\end{itemize}

\begin{rem} \label{typerem}
It will be convenient to abuse pluralization by calling a pair of cycles of type (b), a ``cycle of type 2''.  Call a cycle of type  (b) or  (c) \emph{even} or \emph{odd} according to the parity of $\sum_i |R_i|$.
\end{rem}

Referring again to Figure~\ref{pipedreamsexample}, we see that $(1\,\,-4)(-1\,\,4)$ is an even cycle of type  (b) and $(2\,\,3\,\,-2\,\,-3)$ is an even cycle of type  (c).

\begin{thm}\label{basisthm}
Let $\Delta$ be a Cauchon diagram. There is a bijection between a basis for $\ker(w^\Delta+w)$ and the set that consists of the even type  (b) cycles together with the odd type (c) cycles in $\tau_\Delta$.
\end{thm}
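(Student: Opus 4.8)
The plan is to reduce the computation of $\ker(w^\Delta+w)$ to an eigenspace computation for the signed permutation $\tau_\Delta$, and then to analyse that eigenspace one cycle at a time. First I would invoke Lemma~\ref{lem1}. Since $\tau_\Delta=w^\Delta w^{-1}$ we have $w^\Delta=\tau_\Delta w$, so as operators on $\mathfrak{h}^*$
\[
w^\Delta+w=(\tau_\Delta+\mathrm{id})\,w .
\]
As $w$ is invertible, $\ker(w^\Delta+w)=w^{-1}\ker(\tau_\Delta+\mathrm{id})$, and $w^{-1}$ carries any basis of $\ker(\tau_\Delta+\mathrm{id})$ to a basis of $\ker(w^\Delta+w)$. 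Thus it suffices to produce a basis of the $(-1)$-eigenspace $\ker(\tau_\Delta+\mathrm{id})$ of $\tau_\Delta$ acting on $\mathfrak{h}^*$, indexed by the even type (b) cycles and the odd type (c) cycles.

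Next I would fix the action. Write $\epsilon_1,\dots,\epsilon_n$ for the standard basis of $\mathfrak{h}^*$ on which $W=S^B_{2n}$ acts, and set $\epsilon_{-i}:=-\epsilon_i$; a signed permutation $\sigma$ then acts by $\sigma\cdot\epsilon_k=\epsilon_{\sigma(k)}$ for all $k\in[n]\cup(-[n])$. Each cycle (for types (a) and (c)) or pair of cycles (for type (b)) of $\tau_\Delta$ involves a set $S\subseteq[n]$ of absolute values, and these sets partition $[n]$. The subspaces $V_S:=\mathrm{span}(\epsilon_i:i\in S)$ are therefore $\tau_\Delta$-invariant and yield a direct sum decomposition $\mathfrak{h}^*=\bigoplus_S V_S$. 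Consequently
\[
\ker(\tau_\Delta+\mathrm{id})=\bigoplus_S\ker\big((\tau_\Delta+\mathrm{id})|_{V_S}\big),
\]
so it remains to compute each summand according to the type of the associated cycle.

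Then I would treat the three types. For a type (a) pair $(i)(-i)$ one has $\tau_\Delta\cdot\epsilon_i=\epsilon_i$, so $V_{\{i\}}$ contributes nothing to the $(-1)$-eigenspace. For a type (b) cycle of length $k=\sum_i\abs{R_i}$, let $c_1,\dots,c_k$ be the signed entries of one of the two cycles and put $\epsilon_{c_j}:=\mathrm{sgn}(c_j)\epsilon_{\abs{c_j}}$; these form a signed basis of $V_S$, and $\tau_\Delta\cdot\epsilon_{c_j}=\epsilon_{c_{j+1}}$ (indices mod $k$) exhibits $\tau_\Delta|_{V_S}$ as a cyclic shift of order $k$, whence $-1$ is an eigenvalue with one-dimensional eigenspace spanned by $\sum_j(-1)^{j-1}\epsilon_{c_j}$ if and only if $k$ is even. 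For a type (c) cycle the $2\ell$ entries (with $\ell=\sum_i\abs{R_i}$) are $c_1,\dots,c_\ell,-c_1,\dots,-c_\ell$, so after $\ell$ steps one reaches the negated entry and $\tau_\Delta^{\,\ell}|_{V_S}=-\mathrm{id}$; hence $\tau_\Delta|_{V_S}$ is a twisted shift whose eigenvalues satisfy $\lambda^\ell=-1$, and the value $-1$ occurs with one-dimensional eigenspace (spanned by $\sum_{j=1}^{\ell}(-1)^{j-1}\epsilon_{c_j}$) if and only if $\ell$ is odd.

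Finally, assembling the summands, the nonzero contributions to $\ker(\tau_\Delta+\mathrm{id})$ are exactly one dimension for each even type (b) cycle and one for each odd type (c) cycle, and the corresponding eigenvectors form a basis indexed by precisely these cycles. Applying $w^{-1}$ transports this to the required basis of $\ker(w^\Delta+w)$, establishing the bijection. I expect the main obstacle to be the twisted-shift analysis in type (c): one must correctly identify the half-length $\ell$ and verify $\tau_\Delta^{\,\ell}|_{V_S}=-\mathrm{id}$, since it is exactly this sign flip that reverses the parity condition relative to type (b) (even for (b), odd for (c)); a careful check against the example $\tau_\Delta=(1\,\,-4)(-1\,\,4)(2\,\,3\,\,-2\,\,-3)$, where the even type (b) pair contributes and the even type (c) cycle does not, should confirm the bookkeeping.
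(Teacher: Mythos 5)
Your argument is correct and follows essentially the same route as the paper: both reduce via Lemma~\ref{lem1} to the $(-1)$-eigenspace of $\tau_\Delta$ in the $n$-dimensional reflection representation of $S^B_{2n}$ and then work cycle by cycle, the only difference being that the paper builds the eigenvectors explicitly by solving the recurrence $v_i+P_\tau[i,\tau(i)]v_{|\tau(i)|}=0$ around each cycle and checking the wrap-around sign condition, while you diagonalize each block as a (twisted) cyclic shift. Your observation that $\tau_\Delta^{\,\ell}\vert_{V_S}=-\mathrm{id}$ for a type (c) cycle is just a repackaging of the paper's count of sign changes, and your bookkeeping (even (b), odd (c)) matches the paper's.
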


\begin{proof}
Let $\tau=\tau_\Delta$. For a signed permutation $\sigma$, let $P_\sigma$ be the $n\times n$ matrix (indexed by $[n]$) defined by setting \begin{displaymath}
 P_\sigma[i,j] = \left\{\begin{array}{ll}
1 & \textnormal{ if $\sigma(i)=j$, } \\
-1 & \textnormal{ if $\sigma(i)=-j$,}\\
0 & \textnormal{ otherwise}.
\end{array} \right.
\end{displaymath}

It is straightforward to check that the homomorphism $\sigma\mapsto P^T_\sigma$ is an $n$-dimensional representation of $S_{2n}^B$. Combining this fact with Lemma~\ref{lem1} gives $$\ker(w^\Delta+w)\simeq \ker(1+w^\Delta w^{-1}) \simeq \ker (I+P_{\tau}).$$ Now, note that a vector $v\in\mathbb{R}^n$ is in $\ker (I+P_{\tau})$ if and only if for every $i\in[n]$, the $i$th component $v_i$ of $v$ satisifies \begin{eqnarray}v_i+P_\tau[i,\tau(i)]v_{|\tau(i)|}&=&0.\label{fact} \end{eqnarray} This simple fact allows one to construct a basis $B$ for $ \ker (I+P_{\tau})$ from the grouped cycles in the hypothesis as follows. 

First suppose $(R_1,-R_2,\ldots,R_{2m-1},-R_1,R_2,\ldots,-R_{2m-1})$ is a cycle of type  (c) and set $r=\sum_{i=1}^{2m-1} \left|R_i\right|$. Let $a$ be the first entry in $R_1$ and construct $v\in \mathbb{R}^n$ by setting $v_a = 1$ and, for $1\leq j\leq k-1$, $$v_{|\tau^j(a)|} = (-1)^j\prod_{\ell=1}^jP_\tau[\tau^{\ell-1}(a),\tau^\ell(a)].$$ 
In other words, $v_{|\tau^j(a)|}$ is $(-1)^j$ times the number of sign changes in $\tau$ between $a$ and $\tau^j(a)$. Set the remainder of the components in $v$ to zero. 

Using induction, it is easy to check that $v$ satisfies~(\ref{fact}) for $i=|\tau^j(a)|, 0\leq j\leq r-2$. Moreover, since $\tau^r(a)=a$ we have 
\begin{eqnarray}
v_{|\tau^{r-1}(a)|} + v_{|\tau^r(a)|} &= & v_{|\tau^{r-1}(a)|} + 1\nonumber \\
&=& (-1)^{r-1}\prod_{\ell=1}^{r-1}P_\tau[\tau^{\ell-1}(a),\tau^\ell(a)] + 1\nonumber\\
&=& (-1)^{r-1}(-1)^{\textnormal{(number of sign changes in $\tau$)}-1}+1\label{fact2}.
\end{eqnarray}
Since $\tau$ is of type  (c), the number of sign changes in $\tau$ is even. Thus (\ref{fact2}) is equal to zero if and only if $r$ is odd. This shows that $v\in B$ as required. 

The argument for a cycle of type (b) is similar. Since all cycles are disjoint, it follows that for any $a\in [n]$, $v_a\neq 0$ for exactly one $v\in B$. Hence the elements of $B$ are linearly independent. Finally, since any $w\in\ker (I+P_{\tau})$ satisfies (\ref{fact}), we must have that for any $a\in[n]$ the values $|w_{|\tau^j(a)|}|$ are equal for all $j$. It follows that $w$ is a linear combination of elements in $B$. Thus $B$ is a basis for $\ker (I+P_{\tau})$.\end{proof}

We conclude from Theorem \ref{thm: bcl}, Lemma \ref{lem1} and Theorem \ref{basisthm} the following result.

\begin{cor}
The dimension of the $\C{H}$-stratum associated to $\Delta$ is equal to the total number of type  (b) even cycles plus the total number of type  (c) odd cycles in $\tau_\Delta$.
\end{cor}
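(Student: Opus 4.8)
The plan is to obtain the corollary as an immediate consequence of the three results assembled just before its statement, so that the work is one of lining up the right identifications rather than of fresh computation. First I would invoke part~(3) of Theorem~\ref{thm: bcl} to rewrite the (a priori representation-theoretic) quantity ``dimension of the $\C{H}$-stratum attached to $\Delta$'' as the purely linear-algebraic number $\dim(\ker(w^\Delta+w))$, i.e.\ the nullity of the operator $w^\Delta+w$ acting on $\mathfrak{h}^*$. This reduces the corollary to a statement about that nullity.

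Next I would feed this nullity into Theorem~\ref{basisthm}, which furnishes an explicit bijection between a basis of $\ker(w^\Delta+w)$ and the set consisting of the even type~(b) cycles together with the odd type~(c) cycles of $\tau_\Delta$. Since the cardinality of a basis is by definition the dimension of the space, reading off the size of the right-hand set gives
\[
\dim(\ker(w^\Delta+w)) = \#\{\textnormal{even type (b) cycles of }\tau_\Delta\} + \#\{\textnormal{odd type (c) cycles of }\tau_\Delta\}.
\]
Combining this with the identification of the previous paragraph yields the asserted formula. The role of Lemma~\ref{lem1} in the chain is to certify that the cycles being enumerated genuinely belong to $\tau_\Delta=w^\Delta w^{-1}$, so that the combinatorial cycle data and the spectral data of $w^\Delta+w$ refer to one and the same signed permutation; this is exactly the compatibility that makes Theorem~\ref{basisthm} applicable to the object coming from the pipe dream.

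There is essentially no analytic obstacle here, as all the real work already resides inside Theorem~\ref{basisthm}. The single bookkeeping point I would take care with is the counting convention of Remark~\ref{typerem}, whereby a conjugate pair of type~(b) cycles is counted once, as a ``cycle of type~2''. I would verify that this convention matches the bijection of Theorem~\ref{basisthm}, in which each such conjugate pair contributes exactly one basis vector, and hence one unit of dimension; once this is reconciled, counting each even type~(b) pair once and each odd type~(c) cycle once recovers the nullity with no double counting, and the corollary follows.
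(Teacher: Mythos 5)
Your proposal is correct and follows exactly the route the paper takes: the paper simply deduces the corollary from Theorem~\ref{thm: bcl}(3), Lemma~\ref{lem1}, and Theorem~\ref{basisthm} with no further argument. Your attention to the counting convention of Remark~\ref{typerem} for conjugate pairs of type~(b) cycles is a worthwhile clarification, but the substance matches the paper's reasoning.
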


\section{Enumeration of $\C{H}$-strata with respect to dimension}

Before we begin, let us recall that the Stirling number of the second kind $S(n,j)$ counts the number of partitions of $[n]$ into exactly $j$ non-empty subsets. The following facts are well known (e.g., see~\cite{stanley}).
\begin{prop} \label{snprop}
If $n$ and $j$ are nonnegative integers, then
\begin{eqnarray}
\frac{1}{j!}(e^x-1)^j &=& \sum_{m=j}^\infty S(m,j) \frac{x^m}{m!};\label{expansion}\\
S(n,j) &=& \frac{1}{j!}\sum_{i=0}^{j}(-1)^{j-i}\binom{j}{i}i^n.\label{stir1} \label{bound}
\end{eqnarray}
\end{prop}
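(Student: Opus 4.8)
The plan is to establish the exponential generating function identity (\ref{expansion}) first, and then to read off the explicit formula (\ref{stir1}) as an immediate consequence by extracting coefficients. For (\ref{expansion}), I would argue via the exponential formula for labelled structures. A single nonempty block is a set of size at least one, so its exponential generating function is $\sum_{m\geq 1}\frac{x^m}{m!}=e^x-1$. A partition of $[m]$ into exactly $j$ nonempty blocks is an unordered collection of $j$ such blocks whose label sets partition $[m]$; hence the exponential formula gives $\frac{1}{j!}(e^x-1)^j$ for the generating function of an unordered $j$-tuple of nonempty blocks, the factor $\frac{1}{j!}$ accounting for the fact that the blocks are unordered. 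Since $S(m,j)$ counts exactly these partitions by definition, the coefficient of $\frac{x^m}{m!}$ on the left is $S(m,j)$, which is precisely (\ref{expansion}).

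To obtain (\ref{stir1}), I would expand $(e^x-1)^j$ directly by the binomial theorem,
\[
(e^x-1)^j=\sum_{i=0}^j\binom{j}{i}(-1)^{j-i}e^{ix},
\]
write $e^{ix}=\sum_{n\geq 0}i^n\frac{x^n}{n!}$, and collect the coefficient of $\frac{x^n}{n!}$. This computes the left-hand side of (\ref{expansion}) as
\[
\frac{1}{j!}\sum_{i=0}^j\binom{j}{i}(-1)^{j-i}i^n,
\]
and comparing with the coefficient $S(n,j)$ read off from the right-hand side of (\ref{expansion}) yields (\ref{stir1}) at once. An appealing feature of this route is that the very same binomial expansion furnishes a self-contained verification of (\ref{expansion}) that bypasses the exponential formula: it shows directly that the analytically computed coefficient of $\frac{x^m}{m!}$ agrees with the combinatorial count $S(m,j)$, so the two identities are really two faces of one computation. (One may alternatively interpret $\sum_{i=0}^j(-1)^{j-i}\binom{j}{i}i^n$ as the inclusion--exclusion count of surjections $[n]\twoheadrightarrow[j]$, which equals $j!\,S(n,j)$; I would mention this only as a sanity check.)

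Since both statements are entirely classical, I do not expect a genuine obstacle. The only point demanding any care is the legitimacy of manipulating the series $e^x-1$ and of interchanging the finite sum over $i$ with the extraction of the coefficient of $x^n$. Working in the ring of formal power series over $\mathbb{Q}$ (equivalently, noting absolute convergence for $x$ near $0$) renders every step rigorous, so the proof reduces to the elementary bookkeeping indicated above.
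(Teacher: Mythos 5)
Your proof is correct. The paper does not actually prove this proposition---it states these identities as well-known facts and cites Stanley's \emph{Enumerative Combinatorics}---so there is no in-paper argument to compare against; your derivation (the exponential-formula interpretation of $\frac{1}{j!}(e^x-1)^j$ for (\ref{expansion}), followed by binomial expansion and coefficient extraction for (\ref{stir1})) is precisely the standard textbook proof and is complete and rigorous as written.
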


We now derive a formal power series $H(x, t)\in \mathbb{Q}(t)[[x]]$, where the coefficient of $x^n/n!$ in $H(x)$ is the polynomial $p_n(t)=\sum_{d=0}^n c_dt^d$, $c_d$ being the number of $d$-dimensional $\C{H}$-strata in $U_q[w_{max}^{\{1,\dots,n-1\}}]$.

We find $H$ by using the well-known exponential formula from enumerative combinatorics. Roughly speaking, the formula can be applied in the following type of situation. Suppose one fixes a family $\C{F}$ of finite, ordered subsets from some fixed set. We call the elements of $\C{F}$ ``components''. Suppose each component in $\C{F}$ has a notion of size (a positive integer). Furthermore, for a field $\mathbb{K}$ of characteristic zero, let $f:\mathbb{N}\rightarrow \mathbb{K}$, and call $f(n)$ the ``weight'' associated to the components in $\C{F}$ of size $n$. Finally let $D(x)=\sum_{n\geq 1} f(n) \frac{x^n}{n!}\in \mathbb{K}[[x]]$ be the exponential generating function for the weights of the components of size $n$ in $\C{F}$.

\begin{thm}[{The Exponential Formula~\cite[Corollary 5.1.6]{stanley}}]
We continue with the notation from the previous paragraph. Let $\C{K}$ be the family whose members consist of all finite sets of mutually disjoint components in $\C{F}$. Let $h:(\mathbb{N}\cup\{0\})\rightarrow \mathbb{K}$ be defined by $h(0)=1$ and $h(n)=\sum f(|a_1|)\cdots f(|a_k|)$, where the sum is over all set partitions $a_1,\ldots,a_k$ of $[n]$ with $a_i\neq\emptyset$ for all $i$. If $H(x)\in \mathbb{K}[[x]]$ is the exponential generating function for the $h(n)$, then $$H(x)=\exp(D(x)).$$
\end{thm}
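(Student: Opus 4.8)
The plan is to prove this by expanding the exponential and extracting the coefficient of $x^n/n!$ directly. Since $D$ has zero constant term, the composition $\exp(D(x))=\sum_{k\geq 0}D(x)^k/k!$ is a well-defined element of $\mathbb{K}[[x]]$, and for each fixed $n$ only the terms with $k\leq n$ contribute (every block being nonempty), so there are no formal convergence issues. The claim then reduces to showing that the coefficient of $x^n/n!$ in $\sum_{k\geq 0}D(x)^k/k!$ equals $h(n)$ for every $n\geq 0$.

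First I would record the product rule for exponential generating functions: if $A(x)=\sum_m a_m x^m/m!$ and $B(x)=\sum_m b_m x^m/m!$, then the coefficient of $x^n/n!$ in $A(x)B(x)$ is $\sum_{i+j=n}\binom{n}{i}a_i b_j$. Iterating this $k$ times, the coefficient of $x^n/n!$ in $D(x)^k$ is $\sum\binom{n}{n_1,\ldots,n_k}f(n_1)\cdots f(n_k)$, the sum running over all ordered tuples $(n_1,\ldots,n_k)$ of \emph{positive} integers with $n_1+\cdots+n_k=n$.

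The key combinatorial step is to interpret the multinomial coefficient $\binom{n}{n_1,\ldots,n_k}$ as the number of ways to write $[n]$ as an ordered sequence $(B_1,\ldots,B_k)$ of disjoint nonempty blocks with $\abs{B_i}=n_i$. Hence the coefficient of $x^n/n!$ in $D(x)^k$ equals the sum of $f(\abs{B_1})\cdots f(\abs{B_k})$ taken over all \emph{ordered} set partitions of $[n]$ into exactly $k$ nonempty blocks. Dividing by $k!$ then passes from ordered to unordered set partitions: because the blocks of a partition are pairwise distinct subsets, each unordered partition into $k$ blocks arises from exactly $k!$ ordered ones, while the weight $f(\abs{B_1})\cdots f(\abs{B_k})$ is invariant under reordering the blocks. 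This is precisely where the hypothesis that $\mathbb{K}$ has characteristic zero enters, so that division by $k!$ is legitimate.

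Summing over all $k\geq 0$ then collects the contributions of set partitions with any number of blocks, which is exactly $h(n)=\sum f(\abs{a_1})\cdots f(\abs{a_k})$ over all set partitions of $[n]$; the $k=0$ term handles $n=0$ and yields $h(0)=1$, consistent with the convention. The only genuine obstacle is the careful bookkeeping in the ordered-to-unordered passage — verifying that the $k!$ overcounting is exact and uniform across all partitions into $k$ blocks — but this becomes routine once the multinomial coefficient is identified with the number of ordered set partitions of prescribed block sizes.
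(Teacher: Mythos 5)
The paper does not prove this statement at all; it is quoted verbatim from Stanley's \emph{Enumerative Combinatorics} (Corollary 5.1.6) and used as a black box, so there is no internal proof to compare against. Your argument is the standard, self-contained textbook proof and it is correct and complete: $D$ has zero constant term, so $\exp(D(x))$ is a well-defined formal series and only $k\le n$ contributes to the coefficient of $x^n/n!$; the iterated EGF product rule gives $[x^n/n!]\,D(x)^k=\sum\binom{n}{n_1,\ldots,n_k}f(n_1)\cdots f(n_k)$ over ordered tuples of positive parts; the multinomial coefficient counts ordered set partitions of $[n]$ with prescribed block sizes; and since the blocks of a partition are pairwise distinct sets and the weight is symmetric in the blocks, dividing by $k!$ passes exactly to unordered partitions, whose total weighted sum over all $k$ is $h(n)$. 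The only cosmetic quibble is your remark about where characteristic zero enters: it is already needed for $\exp(D(x))=\sum_k D(x)^k/k!$ to make sense in $\mathbb{K}[[x]]$, rather than specifically at the ordered-to-unordered step; the cancellation of the $k!$ there is an identity of integers and would hold regardless. This does not affect the validity of the proof.
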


In our present work, a ``component''  means a single cycle of type  (a),  (b) or  (c) as in Section~\ref{types}. The size of a component is the sum $\sum_i |R_i|$ (as in Remark~\ref{typerem}). By Theorem~\ref{basisthm}, we would like our weights to distinguish the parities of the sizes of components. So we take $\mathbb{K}=\mathbb{Q}(t)$ and let the weight of $n$ be the polynomial $a_nt+b_n$, where $a_n$ and $b_n$ are as follows. 

First note that the only Cauchon diagram $\Delta$ with $\tau_\Delta$ consisting of exactly one type (a) cycle is the $1\times 1$ black square.  On the other hand, the Cauchon diagram corresponding to the $1\times 1$ white square is a single odd cycle of type (c). So we set $a_1t+b_1=t+1. $

Next, observe that if $\tau_\Delta$ consists of exactly one cycle of type (b) or  (c), then its type is determined by the parity of $n$: $\tau_\Delta$ is of type (c) if and only if $n\geq 3$ is odd. Therefore, we tag with a $t$ the cases where $n$ and the cycle $\tau_\Delta$ have the same parity. Thus, if $n\geq 3$ is odd, then we set $a_n$ to be the number of $n\times n$ diagrams whose $\tau_\Delta$ consists of a single type $(c)$ cycle, and $b_n$ to be the number of $n\times n$ diagrams whose $\tau_\Delta$ consists of a single type $(b)$ cycle. If $n$ is even, we interchange types (b) and (c) in the previous sentence. 

So if $D(x)=D(x,t)$ is the exponential generating function with weights as just described, then using some elementary combinatorics concerning the possible sequences in a type (b) or (c) cycle, imply that $D(x,t)$ is given by

\begin{eqnarray*}
D(x,t) & = & x+\sum_{n=1}^\infty \left[\sum_{j=1}^n\left(\frac{1+(-1)^{n+j}}{2}\right)(j-1)!S(n,j)t\right.\\
& & \hspace{2cm}+ \left.\left(\frac{1-(-1)^{n+j}}{2}\right)(j-1)!S(n,j)\right]\frac{x^n}{n!}.
\end{eqnarray*}
Now we simplify using standard identities (including those from Proposition~\ref{snprop}): 

\begin{align}
&   D(x,t)&    \nonumber\\
&   =    x+\frac{t}{2}\left(\sum_{j=1}^\infty(j-1)!\left(\sum_{n=j}^\infty S(n,j)\frac{x^n}{n!}\right) + \sum_{j=1}^\infty (-1)^j (j-1)!\sum_{n=j}^\infty S(n,j) \frac{(-x)^n}{n!}\right)  \nonumber\\
 & \hspace{0.7cm} + \frac{1}{2}\left(\sum_{j=1}^\infty(j-1)!\left(\sum_{n=j}^\infty S(n,j)\frac{x^n}{n!}\right) - \sum_{j=1}^\infty (-1)^j (j-1)!\sum_{n=j}^\infty S(n,j) \frac{(-x)^n}{n!}\right)\nonumber \\
 &=  x+\frac{t}{2}\left(\sum_{j=1}^\infty (j-1)! \frac{(e^x-1)^j}{j!} + \sum_{j=1}^\infty (j-1)!\frac{(1-e^{-x})^j}{j!}\right)  \nonumber\\
 & \hspace{0.7cm}+\frac{1}{2}\left(\sum_{j=1}^\infty (j-1)! \frac{(e^x-1)^j}{j!} - \sum_{j=1}^\infty (j-1)!\frac{(1-e^{-x})^j}{j!}\right) \nonumber\\
 & = x+\frac{t}{2}\left(-\log(2-e^x) -\log(e^{-x})\right) + \frac{1}{2}\left(-\log(2-e^x) +\log(e^{-x})\right) \nonumber\\ 
&=  x+\log(2-e^{x})^{\frac{-1-t}{2}} + \frac{t-1}{2}x. \nonumber 
\end{align}

 Hence we get:
 \begin{equation}
 D(x,t) =  \log(2-e^{x})^{\frac{-1-t}{2}} + \frac{t+1}{2}x. \label{Dxt}
 \end{equation}
 
 From the above calculations, we obtain the following result.

\begin{thm}\label{thm:main}
If $H(x)=H(x,t)$ is the generating function whose $t^d\frac{x^n}{n!}$ coefficient is the number of $d$-dimensional $\C{H}$-strata in $U_q[w_{max}^{\{1,\dots,n-1\}}]$, then 
$$H(x,t)=\left(\frac{e^x}{2-e^x}\right)^{\frac{t+1}{2}}.$$
\end{thm}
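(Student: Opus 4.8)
The plan is to obtain $H(x,t)$ by a direct application of the Exponential Formula, using the expression for $D(x,t)$ already derived in equation~(\ref{Dxt}). By the Corollary following Theorem~\ref{basisthm}, the dimension of the $\C{H}$-stratum attached to a Cauchon diagram $\Delta$ equals the number of even type~(b) cycles plus the number of odd type~(c) cycles in $\tau_\Delta$. Since the cycle types of $\tau_\Delta$ partition $[n]$ (after grouping the paired type~(b) cycles into a single ``cycle of type~2'' as in Remark~\ref{typerem}), a Cauchon diagram decomposes uniquely into disjoint components, each a single cycle of type~(a), (b) or~(c). This is exactly the combinatorial setup required by the Exponential Formula, with the weight of a component chosen so that the variable $t$ records precisely those components that contribute to the dimension. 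The weight polynomial $a_n t + b_n$ was selected for this purpose, and $D(x,t)$ is its exponential generating function.

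First I would invoke the Exponential Formula directly: it gives $H(x,t) = \exp(D(x,t))$, where $D(x,t)$ is the generating function for the weighted components. The content of the theorem is therefore reduced to substituting the closed form~(\ref{Dxt}) and simplifying. Concretely, I would write
\begin{align*}
H(x,t) &= \exp\!\left( \log(2-e^x)^{\frac{-1-t}{2}} + \frac{t+1}{2}\,x \right)\\
&= (2-e^x)^{\frac{-1-t}{2}} \cdot e^{\frac{t+1}{2}\,x}\\
&= \left( \frac{e^x}{2-e^x} \right)^{\frac{t+1}{2}},
\end{align*}
where the last step collects the two factors by writing $e^{\frac{t+1}{2}x} = (e^x)^{\frac{t+1}{2}}$ and combining exponents. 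This is the claimed formula.

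The genuine mathematical work has in fact already been carried out in the preceding paragraphs: verifying that the cycle decomposition of $\tau_\Delta$ yields a legitimate component structure for the Exponential Formula, and computing the weights $a_n$ and $b_n$ so that the resulting $D(x,t)$ has the stated closed form. The step where one must be careful is the correct identification of weights with the dimension contribution, namely that a $t$ is tagged exactly when the cycle and $n$ share parity (type~(c) occurring precisely when $n \geq 3$ is odd, and type~(b) when $n$ is even), matching the even-(b)/odd-(c) rule from Theorem~\ref{basisthm}. Given $D(x,t)$ as in~(\ref{Dxt}), the passage to $H(x,t)$ is then a routine exponentiation.

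I expect the main obstacle to lie not in the final theorem itself, which is essentially formal once~(\ref{Dxt}) is in hand, but rather in justifying the hypotheses of the Exponential Formula: one must confirm that every Cauchon diagram of type $B_n$ arises from a unique unordered collection of disjoint components, and that the enumeration of single-cycle diagrams by size and type reproduces the Stirling-number expression for $D(x,t)$. The appearance of $(j-1)! \, S(n,j)$ reflects counting the cyclic arrangements of the blocks $R_i$ of a set partition of $[n]$ into $j$ parts, and the parity filter $\frac{1 \pm (-1)^{n+j}}{2}$ isolates which such arrangements give type~(b) versus type~(c). Once these combinatorial inputs are accepted, the derivation of $H(x,t)$ follows immediately as above.
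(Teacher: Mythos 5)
Your proposal is correct and follows exactly the paper's own argument: the proof there is precisely the one-line application of the Exponential Formula, $H(x,t)=\exp(D(x,t))$, followed by substitution of the closed form~(\ref{Dxt}) and the elementary simplification you carry out explicitly. Your additional remarks about verifying the component structure and the weight assignment accurately identify where the real work lies, namely in the derivation of $D(x,t)$ preceding the theorem, not in the theorem's proof itself.
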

\begin{proof}
By the exponential formula we have $H(x,t)=\exp(D(x,t))$. Therefore the result follows from (\ref{Dxt}).
\end{proof}

Note that the coefficient of $\frac{x^n}{n!}$ in $H(x,1)$ is the total number of $\C{H}$-strata in $U_q[w_{max}^{\{1,\dots,n-1\}}]$. In other words,   the coefficient of $\frac{x^n}{n!}$ in $H(x,1)$ is the total number of $\C{H}$-primes in $U_q[w_{max}^{\{1,\dots,n-1\}}]$. On the other hand, the coefficient of $\frac{x^n}{n!}$ in $H(x,0)$ is the number of primitive $\C{H}$-primes in $U_q[w_{max}^{\{1,\dots,n-1\}}]$ as it follows from the Stratification Theorem of Goodearl and Letzter that a stratum is 0-dimensional exactly when the associated $\C{H}$-prime is primitive \cite[II.8.4]{bg}. So we deduce from the previous theorem the following results.

\begin{cor} 
Let $H(x,t)$ be as in the statement of Theorem \ref{thm:main}.  Then:
\begin{enumerate}
\item The exponential generating function for the total number of $\C{H}$-primes in $U_q[w_{max}^{\{1,\dots,n-1\}}]$ is given by
\begin{eqnarray}
H(x,1) &=& \frac{e^x}{2-e^x}.\label{totalnumber}
\end{eqnarray}
\item The exponential generating function for the total number of $\C{H}$-invariant primitive ideals in $U_q[w_{max}^{\{1,\dots,n-1\}}]$ is given by
\begin{eqnarray}
H(x,0) & = & \left( \frac{e^x}{2-e^x} \right)^{\frac{1}{2}}\label{primnumber}
\end{eqnarray}
\end{enumerate}
\end{cor}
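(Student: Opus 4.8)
The plan is to deduce both statements directly from the closed form $H(x,t)=\left(\frac{e^x}{2-e^x}\right)^{\frac{t+1}{2}}$ established in Theorem~\ref{thm:main}, by specializing the variable $t$ and then interpreting the resulting coefficients. The only genuine content beyond a one-line substitution lies in translating ``coefficient of $\frac{x^n}{n!}t^d$'' back into counts of $\mathcal{H}$-primes, and for this I would appeal to the stratification~(\ref{eq:stratification}) and to the Goodearl--Letzter characterization of primitivity.

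For part (1), I would set $t=1$. Then the exponent $\frac{t+1}{2}$ equals $1$, so $H(x,1)=\frac{e^x}{2-e^x}$, which is~(\ref{totalnumber}). To justify the enumerative meaning, recall that the coefficient of $\frac{x^n}{n!}t^d$ in $H(x,t)$ is the number of $d$-dimensional $\mathcal{H}$-strata. Setting $t=1$ sums these contributions over all $d$, so the coefficient of $\frac{x^n}{n!}$ in $H(x,1)$ is the total number of $\mathcal{H}$-strata in $U_q[w_{max}^{\{1,\dots,n-1\}}]$. Since the stratification~(\ref{eq:stratification}) puts the strata in bijection with the $\mathcal{H}$-primes, this total equals the number of $\mathcal{H}$-primes, as claimed.

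For part (2), I would set $t=0$, so that $\frac{t+1}{2}=\frac{1}{2}$ and $H(x,0)=\left(\frac{e^x}{2-e^x}\right)^{\frac{1}{2}}$, which is~(\ref{primnumber}). Here extracting the coefficient of $\frac{x^n}{n!}$ isolates the $t^0$ term, namely the number of $0$-dimensional $\mathcal{H}$-strata. The remaining step is to identify these with the primitive $\mathcal{H}$-primes: by the Stratification Theorem of Goodearl and Letzter \cite[II.8.4]{bg}, a stratum is $0$-dimensional exactly when the associated $\mathcal{H}$-prime is primitive. Hence the coefficient of $\frac{x^n}{n!}$ in $H(x,0)$ counts precisely the $\mathcal{H}$-invariant primitive ideals.

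No step here presents a real obstacle once Theorem~\ref{thm:main} is available; both parts reduce to a substitution followed by invoking the two structural facts just recalled. If any care is warranted, it is only in making explicit that specializing $t$ commutes with extracting the $\frac{x^n}{n!}$ coefficient, which is immediate since for each fixed $n$ the relevant coefficient $p_n(t)$ is a polynomial in $t$ and evaluation at $t=1$ or $t=0$ is well defined.
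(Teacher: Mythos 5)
Your proposal is correct and matches the paper's own argument exactly: the paper also obtains both identities by specializing $t=1$ and $t=0$ in Theorem~\ref{thm:main}, interpreting the $\frac{x^n}{n!}$ coefficient of $H(x,1)$ as the total number of $\mathcal{H}$-strata (equivalently, $\mathcal{H}$-primes) and invoking \cite[II.8.4]{bg} to identify $0$-dimensional strata with primitive $\mathcal{H}$-primes. Your added remark that specialization of $t$ commutes with coefficient extraction is a harmless extra precaution the paper leaves implicit.
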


\section{The proportion of primitive $\C{H}$-primes}

In this section we show that as $n\rightarrow\infty$, the proportion of $\C{H}$-primes that are primitive is asymptotically zero  in the algebra $U_q[w_{max}^{\{1,\ldots ,n-1\}}]$. This fact, while perhaps expected, is not obvious, especially in view of the $m\times n$ quantum matrices case, where if $m$ is fixed and $n\rightarrow\infty$, the asymptotic proportion of $\C{H}$-primes has been recently shown to be positive~\cite{BCLqm}.

If $F(x)$ is a formal power series with coefficients in some ring, we let $[x^n]F(x)$ denote the coefficient of $x^n$ in $F(x)$.

We let $\prim(n)$ denote the number of primitive  $\C{H}$-primes in $U_q[w_{max}^{\{1,\ldots ,n-1\}}]$. Recall that $\prim(n)$ is also the number of 0-dimensional strata of $U_q[w_{max}^{\{1,\ldots ,n-1\}}]$. Then Equation (\ref{primnumber}) gives

$$\prim(n)/n! = [x^n] \exp(x/2)(2-e^x)^{-1/2}.$$

Using the Taylor expansion of $(2-e^x)^{-1/2} = (1-(e^x-1))^{-1/2}$, we see that the coefficients of the power series expansion of $(2-e^x)^{-1/2}$ are nonnegative.  Since $\exp(x/2)$ is coefficient-wise less than or equal to $\exp(x)$ and has nonnegative coefficients, we see

$$\prim(n)/n! = [x^n] \exp(x/2)(2-e^x)^{-1/2} \le [x^n] \exp(x)(2-e^x)^{-1/2}.$$

Recalling from (\ref{totalnumber}) that the exponential generating function for the total number of $\C{H}$-primes is given by 
$$\exp(x)/(2-e^x),$$
we note that to show that the proportion of primitive $\C{H}$-primes in $U_q[w_{max}^{\{1,\ldots ,n-1\}}]$ tends to zero, it is sufficient to show that 
$$[x^n]\exp(x) (2-e^x)^{- 1/2}={\rm o}\left([x^n]\exp(x)/(2-e^x)\right).$$  
We accomplish this via the following lemmas.
\begin{lem} Suppose that $A(x)=\sum_{n\ge 0} a_n x^n/n!$ and $B(x)=\sum_{n\ge 0} b_n x^n/n!$ are two exponential generating functions with positive coefficients.  If $a_n = {\rm o}(b_n)$ and $\log(b_n)/\log(n)\to \infty$ as $n\to \infty$ then $$[x^n]\exp(x)A(x)={\rm o}\left( [x^n] \exp(x)B(x) \right).$$
\label{lem: xx}
\end{lem}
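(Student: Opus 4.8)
The plan is to reduce the statement about coefficients of $\exp(x)A(x)$ and $\exp(x)B(x)$ to a comparison of two binomial-weighted sums, and then to exploit the super-polynomial growth hypothesis on $b_n$ to control the contribution of the small-index terms.

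First I would expand the Cauchy product: since $\exp(x)=\sum_{k\ge 0}x^k/k!$ and $A(x)=\sum_{m\ge 0}a_m x^m/m!$, multiplying and collecting the coefficient of $x^n$ gives
\[
[x^n]\exp(x)A(x)=\frac{1}{n!}\sum_{m=0}^n\binom{n}{m}a_m,
\]
and likewise for $B$. Writing $\widetilde A_n=\sum_{m=0}^n\binom{n}{m}a_m$ and $\widetilde B_n=\sum_{m=0}^n\binom{n}{m}b_m$, the claim becomes exactly $\widetilde A_n={\rm o}(\widetilde B_n)$, the factors of $1/n!$ cancelling.

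Next, fix $\epsilon>0$. By the hypothesis $a_n={\rm o}(b_n)$ there is an $M$ with $a_m\le \epsilon b_m$ for all $m>M$. I would split $\widetilde A_n$ at this threshold. The tail $\sum_{m>M}\binom{n}{m}a_m$ is at most $\epsilon\sum_{m>M}\binom{n}{m}b_m\le \epsilon\widetilde B_n$. The head $\sum_{m\le M}\binom{n}{m}a_m$ involves only the finitely many (hence bounded) coefficients $a_0,\dots,a_M$, and since $\binom{n}{m}\le n^M$ for $m\le M$ and $n\ge 1$, it is bounded above by $C_M n^M$ where $C_M=\sum_{m=0}^M a_m$ depends only on $M$. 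Thus $\widetilde A_n\le C_M n^M+\epsilon\widetilde B_n$.

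The key step, and the place where the growth hypothesis is used, is to show the polynomial head is negligible against $\widetilde B_n$. Since all $b_m>0$, the single term $m=n$ gives $\widetilde B_n\ge b_n$. The condition $\log(b_n)/\log(n)\to\infty$ says precisely that $b_n$ eventually exceeds every fixed power of $n$; taking the power $M+1$, we get $b_n\ge n^{M+1}$ for all large $n$, whence $C_M n^M/\widetilde B_n\le C_M/n\to 0$. Combining, $\limsup_n \widetilde A_n/\widetilde B_n\le\epsilon$, and letting $\epsilon\to 0$ finishes the proof. The main obstacle is exactly this last comparison: the bounded low-order terms $a_0,\dots,a_M$ contribute a polynomial-size quantity to $\widetilde A_n$ that is \emph{not} controlled by $a_n={\rm o}(b_n)$ alone, and it is the super-polynomial growth of $b_n$ that is needed to dominate it.
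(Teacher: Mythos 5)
Your proposal is correct and follows essentially the same route as the paper's proof: expand the Cauchy product into a binomial-weighted sum, split at a threshold supplied by $a_n={\rm o}(b_n)$, bound the tail by $\epsilon$ times the corresponding sum for $B$, and use the super-polynomial growth of $b_n$ (via $\widetilde B_n\ge b_n$) to absorb the polynomially bounded head. The only cosmetic difference is that you bound the head by $C_M n^M$ while the paper uses $\binom{n}{d}(a_0+\cdots+a_d)$; both are polynomial in $n$ and handled identically.
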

\begin{proof} Let $\varepsilon>0$.  By assumption, there exists some natural number $d$ such that $a_n<\varepsilon b_n/2$ for $n\ge d$.  Therefore, for $n$ sufficiently large, we have
\begin{eqnarray*} 
[\frac{x^n}{n!}]\exp(x)A(x)&=& \sum_{j\le n} {n\choose j} a_j \\
&\le &    \sum_{d\le j\le n} \varepsilon {n\choose j} b_j/2 + {n\choose d}(a_0+\cdots +a_d).
\end{eqnarray*}
By assumption $b_n$ grows faster than any polynomial.  Since $a_0,\ldots, a_d$ are constant, we see that $${n\choose d}(a_0+\cdots +a_d)<\varepsilon b_n/2$$ for all $n$ sufficiently large.
Thus
$$[\frac{x^n}{n!}]\exp(x)A(x)\le \varepsilon [\frac{x^n}{n!}]\exp(x)B(x)$$ for all sufficiently large $n$.  The result follows.
\end{proof}
\begin{lem} Suppose that $\{a_n\}$ and $\{b_n\}$ are two sequences of positive numbers such that
$a_n = {\rm o}(b_n)$ and $\log(b_n)/n \to \infty$ as $n\to \infty$.  Then $$\sum_{j=1}^n a_j S(n,j) = {\rm o}\left( \sum_{j=1}^n b_j S(n,j) \right).$$
\label{lem: xy}
\end{lem}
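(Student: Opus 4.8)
The plan is to follow the template of Lemma~\ref{lem: xx}: fix $\varepsilon>0$, choose a threshold $d$ past which the terms of $\{a_j\}$ are dominated by those of $\{b_j\}$, and split the sum into a ``tail'' ($j\ge d$) that is handled termwise and a ``head'' ($j<d$) of boundedly many terms that I will show is asymptotically negligible. Concretely, since $a_n=o(b_n)$ there is a natural number $d$ with $a_j<\frac{\varepsilon}{2}b_j$ for all $j\ge d$, and I would write
\[
\sum_{j=1}^n a_j S(n,j) = \sum_{j=1}^{d-1} a_j S(n,j) + \sum_{j=d}^{n} a_j S(n,j).
\]

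The tail is immediate. Because every Stirling number $S(n,j)$ is nonnegative and $a_j<\frac{\varepsilon}{2}b_j$ for $j\ge d$, the second sum is at most $\frac{\varepsilon}{2}\sum_{j=d}^{n}b_j S(n,j)\le\frac{\varepsilon}{2}\sum_{j=1}^{n}b_j S(n,j)$. For the head I would use the elementary estimate $S(n,j)\le j^n$ (indeed $j!\,S(n,j)$ counts the surjections $[n]\to[j]$, of which there are at most $j^n$), so that with $C:=a_1+\cdots+a_{d-1}$ the first sum is at most $C\,(d-1)^n$. To compare this against $\sum_{j=1}^n b_j S(n,j)$ I would simply drop all but the $j=n$ term, using $S(n,n)=1$ to get the crude lower bound $\sum_{j=1}^n b_j S(n,j)\ge b_n$.

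It now suffices to check that $(d-1)^n=o(b_n)$, and this is exactly the point at which the hypothesis $\log(b_n)/n\to\infty$ is used: it forces $b_n$ to outgrow every exponential (for any base $B>1$ one has $\log(b_n)>n\log B$, hence $b_n>B^n$, for all large $n$), so in particular $C(d-1)^n/b_n\to 0$ and the head lies below $\frac{\varepsilon}{2}\sum_{j=1}^n b_j S(n,j)$ for $n$ large. Adding the two bounds yields $\sum_{j=1}^n a_j S(n,j)\le\varepsilon\sum_{j=1}^n b_j S(n,j)$ eventually, which is the claim. I expect the head to be the only real obstacle: unlike in Lemma~\ref{lem: xx}, where the corresponding bounded-$j$ contribution grew only like a single binomial coefficient and was killed by polynomial-beating growth, here it can grow exponentially like $(d-1)^n$, so the weaker assumption $\log(b_n)/\log(n)\to\infty$ would not be enough — one genuinely needs the super-exponential growth recorded in $\log(b_n)/n\to\infty$.
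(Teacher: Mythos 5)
Your proof is correct and follows essentially the same route as the paper's: split the sum at the threshold $d$ furnished by $a_n=o(b_n)$, absorb the tail termwise, and kill the finitely many head terms by bounding each $S(n,j)$ by an exponential in $n$ (you use $S(n,j)\le j^n$ where the paper uses $S(n,j)<(2j)^n$) and comparing against the single term $b_nS(n,n)=b_n$, which dominates because $\log(b_n)/n\to\infty$ forces superexponential growth. Your closing remark correctly identifies why the weaker hypothesis of Lemma~\ref{lem: xx} would not suffice here.
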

\begin{proof} Let $\varepsilon>0$.  By assumption, there exists some natural number $d$ such that $a_n<\varepsilon b_n/2$ for $n\ge d$. 
Thus
$$\sum_{j=1}^n a_j S(n,j) < \sum_{j<d} a_j S(n,j) + \varepsilon \sum_{d\le j\le n} b_j S(n,j)/2.$$
Now since Formula~(\ref{bound}) easily gives $S(n,j)< (2j)^n$ and $b_n$ grows superexponentially, we have that for $n$ sufficiently large
$$\sum_{j<d} a_j S(n,j) < (a_1+\cdots +a_d)S(n,d) \leq \varepsilon b_nS(n,n)/2.$$
\end{proof}
\begin{cor} In $U_q[w_{max}^{\{1,\ldots ,n-1\}}]$, the proportion of $\C{H}$-primes that are primitive tends to $0$ as $n\to \infty$.
\end{cor}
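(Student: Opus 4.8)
The plan is to combine Lemma~\ref{lem: xx} and Lemma~\ref{lem: xy} with the explicit generating functions from~(\ref{totalnumber}) and~(\ref{primnumber}). As already observed, it suffices to prove that
$$[x^n]\exp(x)(2-e^x)^{-1/2}={\rm o}\left([x^n]\exp(x)(2-e^x)^{-1}\right).$$
I would set $A(x)=(2-e^x)^{-1/2}=\sum_{n\ge 0}a_nx^n/n!$ and $B(x)=(2-e^x)^{-1}=\sum_{n\ge 0}b_nx^n/n!$, so that the displayed statement is exactly $[x^n]\exp(x)A(x)={\rm o}([x^n]\exp(x)B(x))$. This is the conclusion of Lemma~\ref{lem: xx} applied to $A$ and $B$, so the two hypotheses of that lemma, namely $a_n={\rm o}(b_n)$ and $\log(b_n)/\log(n)\to\infty$, become the things to establish.

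First I would produce Stirling-number expansions of the two coefficient sequences. Writing $2-e^x=1-(e^x-1)$ and combining the binomial series with~(\ref{expansion}), one finds
$$a_n=\sum_{j=1}^n\alpha_jS(n,j),\qquad b_n=\sum_{j=1}^n\beta_jS(n,j),$$
where $\beta_j=j!$ and $\alpha_j=\binom{2j}{j}\frac{j!}{4^j}$, the factor $\binom{2j}{j}/4^j$ arising from the expansion $(1-u)^{-1/2}=\sum_{k\ge 0}\binom{2k}{k}u^k/4^k$. This places both $a_n$ and $b_n$ in precisely the form treated by Lemma~\ref{lem: xy}, with the roles of the two sequences in that lemma played by $\{\alpha_j\}$ and $\{\beta_j\}$.

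Next I would verify the hypotheses of Lemma~\ref{lem: xy} for these sequences. Since $\binom{2j}{j}/4^j\sim 1/\sqrt{\pi j}$, we have $\alpha_j/\beta_j\sim 1/\sqrt{\pi j}\to 0$, so $\alpha_j={\rm o}(\beta_j)$; and $\log(\beta_j)=\log(j!)\sim j\log j$, so $\log(\beta_j)/j\to\infty$. Lemma~\ref{lem: xy} then yields $a_n={\rm o}(b_n)$, which is the first hypothesis required by Lemma~\ref{lem: xx}. For the second hypothesis I note that $b_n=\sum_{j=1}^n j!\,S(n,j)\ge n!\,S(n,n)=n!$, whence $\log(b_n)\ge\log(n!)\sim n\log n$ and therefore $\log(b_n)/\log(n)\to\infty$. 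With both hypotheses verified, Lemma~\ref{lem: xx} gives the desired asymptotic comparison, and the corollary follows.

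The structural heart of the argument is the double application: Lemma~\ref{lem: xy} is what lets one pass from the termwise comparison $\alpha_j={\rm o}(\beta_j)$ to the comparison $a_n={\rm o}(b_n)$ of the Stirling-weighted sums, and Lemma~\ref{lem: xx} then upgrades this to the $\exp(x)$-convolved coefficients. Accordingly, I expect the main obstacle to be the middle step: correctly extracting the Stirling-number expansions and, in particular, identifying $\alpha_j=\binom{2j}{j}j!/4^j$ from the half-integer binomial series, since the entire conclusion hinges on the $1/\sqrt{j}$ decay that makes $\alpha_j={\rm o}(\beta_j)$. Once the coefficient sequences are in the shape required by Lemma~\ref{lem: xy}, the remaining growth estimates are routine.
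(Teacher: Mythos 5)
Your proposal is correct and follows essentially the same route as the paper: the same reduction to comparing $[x^n]\exp(x)(2-e^x)^{-1/2}$ with $[x^n]\exp(x)(2-e^x)^{-1}$, the same Stirling-number expansions with weights $j!\binom{2j}{j}/4^j$ versus $j!$, and the same double application of Lemma~\ref{lem: xy} followed by Lemma~\ref{lem: xx}. If anything, you are slightly more explicit than the paper in verifying the growth hypotheses $\log(\beta_j)/j\to\infty$ and $\log(b_n)/\log(n)\to\infty$, which the paper dispatches with a brief remark about superpolynomial growth.
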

\begin{proof} Recall from the discussion above Lemma \ref{lem: xx} that we just need to prove 
$$[x^n]\exp(x) (2-e^x)^{- 1/2}={\rm o}\left([x^n]\exp(x)/(2-e^x)\right).$$
Note that
\begin{eqnarray*}
(2-e^x)^{-1/2} &=& (1-(e^x-1))^{-1/2} \\
&=& \sum_{j\ge 0} {-1/2\choose j}(-1)^j (e^x-1)^j \\
&=& \sum_{j\ge 0} {2j \choose j} (e^x-1)^j/4^j.
\end{eqnarray*}
Thus for $n\ge 1$, we have
$$[\frac{x^n}{n!}](2-e^x)^{-1/2} = \sum_{1\le j\le n} S(n,j)j! {2j \choose j}/4^j.$$
Similarly,
$$[\frac{x^n}{n!}](2-e^x)^{-1} =  \sum_{1\le j\le n} S(n,j)j!.$$
Letting $a_j = j! {2j \choose j}/4^j$ and $b_j = j!$ and noting that 
${2j \choose j}/4^j \to 0$ as $j\to \infty$, we see that
$$[x^n](2-e^x)^{-1/2} = {\rm o}\left( [x^n](2-e^x)^{-1} \right)$$ as $n\to \infty$ by Lemma \ref{lem: xy}.
Since $$\sum_{1\le j\le n} S(n,j)j!$$ grows superpolynomially in $n$, we have that
$$[x^n]\exp(x) (2-e^x)^{- 1/2}={\rm o}\left([x^n]\exp(x)/(2-e^x)\right)$$ by Lemma \ref{lem: xx}.
The result follows.
\end{proof}

\providecommand{\bysame}{\leavevmode\hbox to3em{\hrulefill}\thinspace}
\providecommand{\MR}{\relax\ifhmode\unskip\space\fi MR }
\providecommand{\MRhref}[2]{%
  \href{http://www.ams.org/mathscinet-getitem?mr=#1}{#2}
}
\providecommand{\href}[2]{#2}

\end{document}